\theoremstyle{plain}
\newtheorem{theorem}{Theorem}
\newtheorem{lemma}{Lemma}[section]
\newtheorem{theo}[lemma]{Theorem}
\newtheorem{proposition}[lemma]{Proposition}
\newtheorem{corollary}[lemma]{Corollary}
\theoremstyle{definition}
\newtheorem{defi}{Definition}
\newtheorem{definition}[lemma]{Definition}
\newtheorem{remark}[lemma]{Remark}
\newtheorem{example}[lemma]{Example}
\newcommand{\cal}[1]{\mathcal{#1}}
\let\egthree=\phi
\let\phi=\varphi
\let\varphi=\egthree
\newcounter{sebcomments}
\begin{document}
\title{Stability in Outer space}
\author{Ursula Hamenst\"adt and Sebastian Hensel}
\thanks
{AMS subject classification: 20M34\\
Both authors were partially supported by ERC grant ``Moduli''}
\date{March 10, 2017}

\begin{abstract}
We characterize strongly Morse quasi-geodesics in Outer space as
quasi-geodesics which project to quasi-geodesics in the
free factor graph. We define convex cocompact subgroups of  
${\rm Out}(F_n)$ as subgroups such that an orbit map
in the free factor graph is a quasi-isometric embedding, 
and we characterize such groups 
via their action on Outer space in a way
which resembles the characterization of
convex cocompact subgroups of mapping class groups. 
\end{abstract}

\maketitle

\section{Introduction}

A quasi-geodesic $\gamma$ in a geodesic metric space $X$ is called
\emph{stable} if for every $L\geq 1$ there exists some
$M(L)>0$ such that 
any $L$-quasi-geodesic $\eta$ with endpoints on 
$\gamma$ is contained in the $M(L)$-neighborhood of 
$\gamma$.

In a hyperbolic geodesic metric space, any quasi-geodesic is stable
in this sense, but many geodesic metric spaces which are
not hyperbolic contain stable quasi-geodesics as well. 
We refer to \cite{CS13} for examples. 

Often the existence of stable quasi-geodesics in a space $X$ is 
related to hyperbolicity of some graph
associated to $X$, in particular in the presence of 
a large isometry group. In the case of the 
\emph{Teichm\"uller space}  ${\cal T}(S)$ of a surface $S$ of finite type
equipped with the Teichm\"uller metric,  
stability is characterised as follows. 
A quasi-geodesic is stable if and only if it projects to 
a quasi-geodesic in the \emph{curve graph} ${\cal C}(S)$ 
of $S$ \cite{H10}. The
curve graph is a hyperbolic geodesic metric graph 
equipped with an action of the \emph{mapping class group}. 

In the attempt to find similarities between the geometry of the 
mapping class group of a surface of finite type and the
geometry of the  
\emph{Outer automorphism group} ${\rm Out}(F_n)$ 
of a free group $F_n$ with $n\geq 3$ generators, two natural 
hyperbolic ${\rm Out}(F_n)$-graphs were found.

The so-called 
\emph{free factor graph} ${\cal F\cal F}$
is the metric graph whose vertices are conjugacy classes
of non-trivial free factors of $F_n$. Two vertices $A,B$ are
connected by an edge of length one if up to conjugation,
either $A<B$ or $B<A$. It is a hyperbolic
${\rm Out}(F_n)$-graph \cite{BF14}. 
There is also the
\emph{free splitting graph} which however is not relevant for this work.

Our first goal is
to show that the free factor graph characterizes stability
in \emph{Outer space} $cv_0(F_n)$ equipped with the 
\emph{symmetrized Lipschitz metric} $d$
in the same way the curve graph characterizes 
stability of Teichm\"uller space with the Teichm\"uller metric.
Here Outer space is 
the space of all 
minimal free actions of $F_n$ on simplicial trees $T$ with 
quotient $T/F_n$ of volume one. It is equipped with
the symmetrized Lipschitz metric $d$ 
which is invariant under the action of 
${\rm Out}(F_n)$.
There also is a natural coarsely well defined
projection $\Upsilon:cv_0(F_n)\to {\cal F\cal F}$.

The symmetrized Lipschitz metric on Outer space 
is not geodesic and therefore we will work
instead with coarse geodesics. By definition,
a \emph{$c$-coarse geodesic} is a path
$\gamma:\mathbb{R}\to cv_0(F_n)$ such that for all 
$s,t\in \mathbb{R}$ we have
\[\vert s-t\vert -c\leq d(\gamma(s),\gamma(t))\leq \vert s-t\vert +c.\]
Note that a coarse geodesic need not be continuous.

Let $d_L$ be the \emph{one-sided Lipschitz metric} on 
$cv_0(F_n)$. By definition, $d_L(T,T^\prime)$ is the infimum of 
the logarithms of the Lipschitz constants among all 
marked homotopy equivalences $f:T/F_n\to T^\prime/F_n$.
For a number $K>0$, a \emph{$K$-quasi-geodesic} for
$d_L$ is a path $\gamma:[a,b]\to cv_0(F_n)$ so that for 
all $s<t$ we have 
\[\vert t-s\vert /K-K\leq d_L(\gamma(s),\gamma(t))
\leq K\vert t-s\vert +K.\]

\begin{defi}\label{morse}
A coarse geodesic 
$\gamma\subset (cv_0(F_n),d)$
is called \emph{strongly Morse}
if for any $K\geq 1$ there is a number
$M=M(K)>0$ with the following property. 
Any $K$-quasi-geodesic for $d_L$ or $d$ with endpoints on $\gamma$
is contained in the $M$-neighborhood of $\gamma$ for the
symmetrised Lipschitz metric. 
\end{defi}

In the sequel we talk about uniformly strongly Morse 
coarse geodesics to denote a collection of paths which 
fulfill  the conditions in Definition \ref{morse}
with the same constants. Similarly, we talk about collections of uniform
quasi-geodesics.

For a number $\epsilon >0$ let ${\rm Thick}_\epsilon(F_n)\subset cv_0(F_n)$ be 
the subspace of all $F_n$-trees whose volume one quotient does not contain
a nontrivial loop of length less than $\epsilon$.
We show

\begin{theorem}\label{stability}
Let $\gamma:(a,b)\to {\rm Thick}_\epsilon(F_n)$ be a $c$-coarse geodesic for the 
symmetrized Lipschitz metric. Then the following are equivalent.
\begin{enumerate}
\item The path $t\to \Upsilon(\gamma(t))$ is a uniform quasi-geodesic
in ${\cal F\cal F}$.
\item $\gamma$ is uniformly strongly Morse.
\end{enumerate}
\end{theorem}

The precise meaning of the implication $(1)\Rightarrow (2)$ 
in Theorem \ref{stability} is as follows.  
For all $c>0,L>1,K>0$ there exists
a constant $M=M(c,L,K)>0$ with the following property. Let $\gamma:(a,b)\to 
cv_0(F_n)$ be a $c$-coarse geodesic for the symmetrized Lipschitz metric whose
composition with $\Upsilon$ is an $L$-quasi-geodesic. Then 
any
$K$-quasi-geodesic for the one sided Lipschitz metric 
or the symmetrized Lipschitz metric with endpoints on 
$\gamma$ is contained in the $M$-neighborhood of $\gamma$
with respect to the symmetrized Lipschitz metric.

Motivated by the theory of Kleinian groups, Farb and Mosher define in
\cite{FM02} the notion of a \emph{convex cocompact subgroup} $\Gamma$ of the
mapping class group ${\rm Mod}(S)$ of a surface $S$ of genus $g\geq
2$ via geometric properties of the action of $\Gamma$ on Teichm\"uller space
${\cal T}(S)$ for $S$. Later, an equivalent characterization using the
action of $\Gamma$ on the curve graph of $S$ was established in \cite{H05}
and \cite{KeL08}. Our second goal is 
to develop a similar
theory for subgroups of the outer automorphism group of a free
group. We briefly recall the results in the mapping class
group case. To this end denote by $\partial {\cal T}(S)$ the 
Thurston boundary of Teichm\"uller space.

\begin{theorem}[\cite{FM02,H05,KeL08}]\label{farbmosher}
The following properties of 
a finitely generated subgroup $\Gamma$ of 
${\rm Mod}(S)$ are 
equivalent.
\begin{enumerate}
\item The orbit map on the curve graph ${\cal C}(S)$ of $S$ 
is a quasi-isometric 
embedding.
\item A $\Gamma$-orbit on ${\cal T}(S)$ is quasi-convex: 
For any $x\in {\cal T}(S)$ and all $g,h\in \Gamma$,
the Teichm\"uller geodesic
connecting $gx,hx$
is contained in a uniformly bounded neighborhood of
$\Gamma x$. 
\end{enumerate}
Moreover, the group $\Gamma$ is hyperbolic, and there is an
equivariant homeomorphism $F:\partial \Gamma\to \Lambda(\Gamma)$ 
where $\partial \Gamma$ is the Gromov boundary of $\Gamma$ 
and where $\Lambda(\Gamma)\subset \partial {\cal T}(S)$ is the
set of accumulation points of a $\Gamma$-orbit on ${\cal T}(S)$.
\end{theorem}

Part (2) of Theorem \ref{farbmosher} shows that such so-called
convex cocompact subgroups of the mapping 
class group are precisely those groups which 
act on Teichm\"uller space equipped with the
Teichm\"uller metric as convex cocompact
groups: This is a characterization not by intrinsic properties
of the group, but via its isometric 
action on Teichm\"uller space, a bounded domain in $\mathbb{C}^{3g-3}$.
It then turns out that there is an equivalent
characterization via the action of the group
on the curve graph. An intrinsic characterization 
of convex cocompact subgroups of ${\rm Mod}(S)$ is due to Durham and
Taylor \cite{DuT15}.

\begin{defi}\label{convexcocom}
A finitely generated 
subgroup $\Gamma$ of ${\rm Out}(F_n)$ is  
\emph{convex cocompact} if one (and hence every) orbit
map of its action on the free factor graph is a
quasi-isometric embedding.
\end{defi}

The following result is the analog of
Theorem \ref{farbmosher} for ${\rm Out}(F_n)$. For its formulation,
we denote by ${\rm CV}(F_n)$ the projectivisation of 
$cv_0(F_n)$ with its boundary $\partial {\rm CV}(F_n)$. 
There is a natural topology on 
$\overline{{\rm CV}(F_n)}={\rm CV}(F_n)\cup\partial{\rm CV}(F_n)$ 
such that $\overline{{\rm CV}(F_n)}$ is compact and that 
the restriction of this topology to 
the open dense subset ${\rm CV}(F_n)$ is the topology inherited from 
$cv_0(F_n)$.  

\begin{theorem}\label{thm:main-characterize}
Let $\Gamma$ be a finitely generated
subgroup of ${\rm Out}(F_n)$. Then 
the following are equivalent.
\begin{enumerate}
\item $\Gamma$ is convex cocompact.
\item Let $T\in cv_0(F_n)$. Then for all $g,h\in \Gamma$,
the points $gT,hT$ are connected by a uniformly strongly Morse
$c$-coarse geodesic which is contained in a uniformly
bounded neighborhood of $\Gamma T$.
\end{enumerate}
Moreover, the group $\Gamma$ is hyperbolic, and there is an
equivariant homeomorphism $F:\partial \Gamma\to \Lambda(\Gamma)$ 
where $\partial \Gamma$ is the Gromov boundary of $\Gamma$ and where
$\Lambda(\Gamma)\subset \partial {\rm CV}(F_n)$ is the set of accumulation 
points of a $\Gamma$-orbit on ${\rm CV}(F_n)$. 
\end{theorem}

Corollary \ref{linesofminconvex} contains another characterization
of convex cocompact subgroups of ${\rm Out}(F_n)$ in the 
spirit of the work of Farb and Mosher \cite{FM02}. This
description uses lines of minima as defined in \cite{H14} and is 
a bit more difficult to explain. However, it is the characterization of
such groups which contains the most information.

%

Examples of convex cocompact groups are Schottky groups. In the case
$n=2g$ for some $g\geq 2$, convex cocompact subgroups of the mapping class
group of a surface of genus $g$ with one puncture,
viewed as subgroups of ${\rm Out}(F_n)$, 
are convex cocompact.
We discuss these examples in Section~\ref{sec:examples}.

For mapping class groups of a closed surface $S$,
there is yet another characterization of  convex 
cocompact subgroups \cite{FM02,H05}.
Namely, $\Gamma$ is convex cocompact if and only if 
the extension $G$ of $\Gamma$ given by the exact sequence
\[0\to \pi_1(S)\to G\to \Gamma\to 0\]
is word hyperbolic.

 In contrast, 
 the $F_n$-extension of a convex cocompact
subgroup of ${\rm Out}(F_n)$ need not be word hyperbolic.
As an example, the extension of a convex cocompact subgroup of
the mapping class group of a surface with a puncture is not 
hyperbolic. 
However, Dowdall and Taylor \cite{DT14} showed
that the $F_n$-extension of a convex cocompact subgroup 
$\Gamma$ all of whose elements are non-geometric is hyperbolic.
Moreover, they
characterize hyperbolic $F_n$-extensions via 
the action of ${\rm Out}(F_n)$ on the so-called 
\emph{cosurface graph} \cite{DT16}.


There is overlap of our work with the work of 
Dowdall and Taylor \cite{DT14,DT15}.
However, their arguments are very different from the arguments
we use, and their main goal  is different in flavor as well.
There is also some overlap with the work \cite{NPR14} whose
main result is a key ingredient in the proof of 
Theorem \ref{farbmosher}.

\noindent{\bf Organization:}
Section \ref{geometric} 
collects the basic tools and background.
In Section \ref{linesofminima} we relate lines of minima
as introduced in \cite{H14} to coarse geodesics in the
thick part of Outer space whose shadows in the free factor 
graph are quasi-geodesics. Section \ref{endpoints} 
analyzes endpoints of lines of minima in $\partial {\rm CV}(F_n)$
which leads to the proof of the first part of Theorem \ref{thm:main-characterize}.
In Section \ref{endpoints} it is shown that 
coarse geodesics in Outer space whose shadows are parametrized
quasi-geodesics in the free factor graph arise from lines of minima.
Section 6 completes the proof of the main results of this paper, and in
Section 7 we discuss examples of convex cocompact subgroups of
${\rm Out}(F_n)$.

\noindent
{\bf Acknowledgment:} We thank the anonymous referee for useful 
suggestions that improved the article.
We are also grateful to Spencer Dowdall and Samuel Taylor
for drawing our attention to the work \cite{DT14}.

\section{Geometric tools }\label{geometric}

\subsection{The boundary of the free factor graph}\label{freefacbd}

The free factor graph ${\cal F\cal F}$ is hyperbolic \cite{BF14}. Its Gromov boundary 
$\partial {\cal F\cal F}$ can be described as follows \cite{BR12,H12}.

Unprojectivized Outer space $cv(F_n)$ of simplicial minimal free $F_n$-trees
equipped with the equivariant Gromov Hausdorff topology 
can be completed by attaching a boundary 
$\partial cv(F_n)$.
This boundary consists of all minimal \emph{very small}
actions of $F_n$ on $\mathbb{R}$-trees which either
are not simplicial or which are not free
\cite{CL95,BF92}. Here an $F_n$-tree is very small if
arc stabilizers are at most maximal cyclic and
tripod stabilizers are trivial. We denote by ${\rm CV}(F_n)$
the projectivization of $cv(F_n)$, with its boundary
$\partial {\rm CV}(F_n)$. Also, from now on we always denote
by $[T]\in \overline{{\rm CV}(F_n)}={\rm CV}(F_n)\cup 
\partial {\rm CV}(F_n)$ 
the projectivization of a tree
$T\in \overline{cv(F_n)}=cv(F_n)\cup \partial cv(F_n)$.

An element $u\in F_n$ is called \emph{primitive} if 
it can be completed to a free basis of $F_n$. 
The set of Dirac measures on pairs of fixed points
of all elements in some primitive conjugacy class $\alpha$ of $F_n$
is a locally finite $F_n$-invariant Borel measure
on $\partial F_n\times \partial F_n- \Delta$ which we call
\emph{dual} to $\alpha$. 
The closure of all such measures in the space 
of all locally finite  
$F_n$-invariant Borel measures 
on 
$\partial F_n\times \partial F_n -\Delta$, equipped with the
weak$^*$-topology, is the 
space ${\cal M\cal L}$ of 
\emph{measured laminations} for $F_n$. It is
invariant under the natural action of  
${\rm Out}(F_n)$.
The projectivization ${\cal P\cal M\cal L}$ 
of ${\cal M\cal L}$ is compact, and ${\rm Out}(F_n)$ acts on
$\mathcal{PML}$ minimally by homeomorphisms \cite{Ma95}. In the sequel
we always denote by $[\mu]\in {\cal P\cal M\cal L}$ the projectivization of 
a measured lamination $\mu\in {\cal M\cal L}$.

By \cite{KL09}, there is a 
continuous length pairing 
\[\langle , \rangle:
 \overline{cv(F_n)}\times {\cal M\cal L}\to [0,\infty).\] 
If $\xi\in {\cal M\cal L}$ is dual to a primitive conjugacy class $\alpha$ in $F_n$
and if $T\in cv(F_n)$ 
then $\langle T,\xi\rangle$ equals the shortest length of 
a representative of $\alpha$ on $T/F_n$.
If $\mu\in {\cal M\cal L}$ is arbitrary then
$\langle T,\mu\rangle >0$ for every tree
$T\in cv(F_n)$. 

\begin{definition}\label{dual}
A measured lamination $\mu\in {\cal M\cal L}$ is 
\emph{dual} to a tree 
$T\in \partial cv(F_n)$ 
if $\langle T,\mu\rangle =0$.
\end{definition}

Note that if $\mu$ is dual to $T$ then any multiple of $\mu$ is 
dual to every tree obtained from $T$ by scaling, 
so we can talk about a projective measured lamination which is  
dual to a projective tree.
We note for later reference (see \cite{H12})

\begin{lemma}\label{existence}
Every projective tree $[T]\in \partial{\rm CV}(F_n)$ admits
a dual measured lamination.
\end{lemma}

We say that a measured lamination $\mu$ is \emph{supported in 
a free factor} $H$ of $F_n$ if the support of $\mu$ is contained
in the $F_n$-orbit of $\partial H\times \partial H - \Delta$.
If $[T]$ has point stabilizers containing a free
factor, then any measured lamination 
supported in the free factor is dual to $T$.
If $[T]\in \partial{\rm CV}(F_n)$ is simplicial then 
the set of measured laminations dual to $[T]$ 
consists of convex combinations of measured laminations
supported in a point stabilizer of $[T]$.

A (projective) tree $[T]\in \partial {\rm CV}(F_n)$ is called
\emph{indecomposable} if for any non-degenerate segments $I,J\subset T$
there are elements $u_1,\ldots,u_n\in F_n$ with $I\subset u_1J\cup
\dots \cup u_nJ$ and so that $u_iJ\cap u_{i+1}J$ is a non-degenerate
segment for all $i$.

Let $\sim$ be the smallest equivalence relation
on $\partial {\rm CV}(F_n)$ with the following property.
For every tree $[T]\in \partial {\rm CV}(F_n)$ and 
every $\mu\in {\cal M\cal L}$ dual to 
$[T]$, any tree $[S]\in \partial {\rm CV}(F_n)$ dual
to $\mu$ is equivalent to $[T]$.

\begin{theo}[\cite{BR12, H12}]\label{thm:boundary-ff}
The Gromov boundary $\partial {\cal F\cal F}$ of ${\cal F\cal F}$ can
be identified with the set of equivalence classes under $\sim$ of 
indecomposable projective trees $[T]$ with the following additional property.
Either the $F_n$-action on $T$ is free, or there is a compact surface
$S$ with non-empty connected boundary, and there is a minimal filling
measured lamination $\mu$ on $S$ so that $T$ is dual to $\mu$.
\end{theo}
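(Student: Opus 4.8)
The plan is to construct a homeomorphism between $\partial\mathcal{FF}$ and the set of $\sim$-classes described in the statement, using folding paths in Outer space as the bridge. The technical engine I would invoke is the result of Bestvina--Feighn \cite{BF14}: there is a coarsely Lipschitz, coarsely surjective projection $\pi\colon cv_0(F_n)\to\mathcal{FF}$ (send a graph to a free factor carrying a short candidate loop), and the $\pi$-image of any folding path in $cv_0(F_n)$ is an unparametrized quasi-geodesic in $\mathcal{FF}$ with constants depending only on $n$. Since $\pi$ sends sets of bounded Lipschitz diameter to bounded sets, a folding path whose shadow escapes every bounded subset of $\mathcal{FF}$ must itself leave every compact subset of $cv_0(F_n)$, and (as $\overline{{\rm CV}(F_n)}$ is compact) its projectivization subconverges to a point of $\partial{\rm CV}(F_n)$.

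Now let $[T]$ be an indecomposable tree as in the statement. Pick a sequence $T_k\in cv_0(F_n)$ with $[T_k]\to[T]$ in $\overline{{\rm CV}(F_n)}$, fix a basepoint $T_0$, and connect $T_0$ to $T_k$ by folding paths $\gamma_k$. I would first show $\pi(T_k)\to\infty$ in $\mathcal{FF}$: if $\pi(T_k)$ stayed in a bounded set, then after passing to a subsequence one could find a single proper free factor $H$ carrying all the $T_k$, and a length estimate would force $[T]$ to be carried by $H$, contradicting the hypothesis on $[T]$ (no proper free factor carries it — this is part of being ``arational'', and follows from indecomposability together with the surface/lamination structure in the non-free case). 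Next, using indecomposability of $[T]$ in the free case, and the minimality and filling property of $\mu$ in the surface case, I would show that the quasi-geodesic rays $\pi(\gamma_k)$ fellow-travel: any two of them stay within uniformly bounded Hausdorff distance. Hence they converge in $\mathcal{FF}\cup\partial\mathcal{FF}$ to a single point $\Phi([T])\in\partial\mathcal{FF}$, independent of the chosen sequence $T_k$ and of the chosen folding paths.

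It then remains to see that $\Phi$ descends to a bijection on $\sim$-classes which is a homeomorphism. For the descent, note that the limiting direction of $\pi(\gamma_k)$ is detected by how the short candidates along $\gamma_k$ converge in $\mathcal{PML}$, which is governed by the dual lamination of $[T]$; since $\sim$ is generated by the relation of sharing a dual lamination, $[T]\sim[T']$ forces $\Phi([T])=\Phi([T'])$, and running the same estimate backwards shows distinct $\sim$-classes give distinct boundary points, so $\Phi$ is injective on $\sim$-classes. Surjectivity is the reverse construction: given $\xi\in\partial\mathcal{FF}$, pick a quasi-geodesic ray to $\xi$, realize it coarsely as the shadow of a folding path $\rho$, and let $[T_\xi]$ be a forward subsequential limit of $[\rho(t)]$; one shows $[T_\xi]$ is indecomposable of the stated type (via the dichotomy for arational trees, \cite{BR12}, applied to forward limits of folding paths whose shadows escape to infinity) and that $\Phi([T_\xi])=\xi$. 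Continuity of $\Phi$ and of its inverse follows by comparing the Gromov-product topology on $\partial\mathcal{FF}$ with the subspace topology on $\sim$-classes coming from $\overline{{\rm CV}(F_n)}$, again through the folding-path estimates.

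The main obstacle — where essentially all of the content lies, and which is exactly what \cite{BR12} and \cite{H12} establish — is twofold. First, one must prove that a forward limit of a folding path whose shadow escapes to infinity in $\mathcal{FF}$ is arational; this requires genuine control of which free factors can carry subtrees of a limit tree along a folding path, combining the fact that folding-path shadows are unparametrized quasi-geodesics downstairs with the observation that being carried by a proper free factor obstructs escape in $\mathcal{FF}$. Second, one needs Reynolds' structure theorem for arational very small $F_n$-trees: an arational tree is either free, in which case it is indecomposable, or it is geometric and dual to a minimal filling measured lamination on a compact surface with one boundary component — this is proved by analyzing the transverse measured structure of the tree and extracting a surface when indecomposability fails. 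Granting these, the remaining work (well-definedness, injectivity, surjectivity, continuity) is bookkeeping over the hyperbolicity of $\mathcal{FF}$ and the coarse projection $\pi$, though not entirely trivial bookkeeping, since $\pi$ is only coarsely defined and folding paths are only coarse geodesics.
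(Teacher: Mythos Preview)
The paper does not prove this theorem; it is stated as a result from the literature with citations to \cite{BR12} and \cite{H12}, and no argument is given beyond a short description of the quotient topology on $\partial\mathcal{FF}$ following the statement. So there is nothing in the paper to compare your proposal against.

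That said, your sketch is a reasonable outline of the strategy actually carried out in those references: use folding paths and their quasi-geodesic shadows (via \cite{BF14}) to pass between $\partial\mathcal{FF}$ and limit trees in $\partial{\rm CV}(F_n)$, invoke Reynolds' structure theorem to identify arational trees as either free-indecomposable or surface-type, and control well-definedness through dual laminations. You correctly flag that the substantive content --- arationality of forward limits along escaping folding paths, and the structure of arational trees --- is exactly what the cited papers supply, and that the rest is hyperbolic bookkeeping. One small point: in your surjectivity step you should be more careful that the subsequential limit $[T_\xi]$ is independent (up to $\sim$) of the chosen folding path and subsequence; this is handled in \cite{H12} via the length pairing and dual laminations rather than purely via fellow-traveling, and is where the equivalence relation $\sim$ genuinely enters.
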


In the sequel we call a (projective) tree with the properties stated in 
the theorem \emph{arational}. We refer to the main 
result of \cite{R12} for a characterization
of arational trees which justifies this terminology.

At this point, we record a criterion for arationality 
that will be used later.
To state it, we need the following definition. 
An \emph{alignment preserving map} between two
$F_n$-trees $T,T^\prime\in \overline{cv(F_n)}$ is defined to be an 
equivariant map $\rho:T\to T^\prime$ with the property
that $x\in [y,z]$ implies $\rho(x)\in [\rho(y),\rho(z)]$. The map $\rho$ is 
continuous on segments. 

\begin{lemma}\label{lem:arationality-criterion}
  Let $[T]\in \partial{\rm CV}(F_n)$ be given. Suppose that $T$ does not
  have point stabilizers containing free factors,
  and that there is no tree $T^\prime\in \partial cv(F_n)$ which
  can be obtained from $T$ by a one-Lipschitz alignment preserving
  map $\rho:T\to T^\prime$ collapsing a nontrivial subtree of $T$ to a
  point. Then $[T]$ is arational.
\end{lemma}
\begin{proof}
  By the results in Section 10 of \cite{R12}, a tree $T$ which satisfies 
  the assumption in the lemma on 
  non-existence of interesting alignment
  preserving maps is indecomposable. By
  Proposition~10.1 of \cite{H12}, such a tree $T$ is arational if it
  does not have point stabilisers containing nontrivial free factors
  (compare also \cite{BR12}), proving the lemma.
\end{proof}

\smallskip
By continuity of the length pairing, the set 
of all projective trees $[S]$ which are dual to some fixed 
measured lamination $\mu$ is a closed
subset of $\partial{\rm CV}(F_n)$. 
The topology on $\partial {\cal F\cal F}$
is the quotient topology for the closed 
equivalence relation $\sim$ on the set of arational projective
trees.
It can be described 
as follows. A sequence of 
equivalence classes represented by trees
$S_i$ converges to the equivalence class represented by $S$ if 
there is a sequence $(\nu_i)\subset {\cal M\cal L}$ so that
$\langle S_i,\nu_i\rangle =0$ for all $i$ and such that
$\nu_i\to \nu$ in ${\cal M\cal L}$ 
with $\langle S,\nu\rangle =0$ \cite{H12}.

\begin{definition}\label{def:positive-pair}
A pair of measured laminations $(\mu,\nu)\in 
{\cal M\cal L}\times {\cal M\cal L}$  is called a
\emph{positive pair} if for any tree $S\in \overline{cv(F_n)}$ we
have $\langle S,\mu+\nu\rangle >0$.
\end{definition}

Positivity of a pair is invariant under scaling each individual
component by a positive factor, so it is defined for 
pairs of projective measured laminations. 

\begin{lemma}\label{positiveopen}
The set of positive pairs is an open subset of 
${\cal P\cal M\cal L}\times {\cal P\cal M\cal L}$.
\end{lemma}
\begin{proof}
By invariance under scaling, it suffices to show the following.
Let $A\subset \overline{cv(F_n)}$ be a compact set which 
projects onto $\overline{CV(F_n)}$, and let
$B\subset {\cal M\cal L}$ be a compact set which projects onto
${\cal P\cal M\cal L}$. Let $(\mu,\nu)\in B\times B$ be 
a positive pair; then there exists a neighborhood 
$U$ of $(\mu,\nu)$ in $B\times B$ such that
$\langle S,\zeta+\xi\rangle >0$ for all 
$(\zeta,\xi)\in U$ and all $S\in A$. However, this is immediate
from continuity of the length pairing and compactness of $A$.
\end{proof}

Corollary 10.6 of \cite{H12} identifies positive pairs which 
are of particular significance for our purpose.

\begin{lemma}\label{lemma:dual-lam-positive-pair}
Let $[T]\not=[T^\prime]$ be arational trees which define
distinct points in 
$\partial {\cal F\cal F}$. Let $\mu,\mu^\prime\in {\cal M\cal L}$ be
dual to $[T],[T^\prime]$; 
 then $(\mu,\mu^\prime)$ is a positive pair.
\end{lemma}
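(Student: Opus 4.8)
The plan is to argue by contradiction, deducing from a failure of positivity a coincidence of points in $\partial{\cal F\cal F}$ that violates the hypothesis. Suppose $(\mu,\mu')$ is not a positive pair, so there is a tree $S\in\overline{cv(F_n)}$ with $\langle S,\mu+\mu'\rangle=0$; since the length pairing takes values in $[0,\infty)$, this forces $\langle S,\mu\rangle=0$ and $\langle S,\mu'\rangle=0$, i.e. both $\mu$ and $\mu'$ are dual to $S$. As the dual laminations of boundary trees appearing in the statement are nonzero, and the length pairing is strictly positive on $cv_0(F_n)\times({\cal M\cal L}\setminus\{0\})$ — hence, after rescaling, on $cv(F_n)\times({\cal M\cal L}\setminus\{0\})$ — the tree $S$ cannot lie in $cv(F_n)$. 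Therefore $S\in\partial cv(F_n)$, so that $[S]\in\partial{\rm CV}(F_n)$.

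Next I would read off the contradiction from the definition of the relation $\sim$. Since $\mu$ is dual to $[T]$ and $[S]\in\partial{\rm CV}(F_n)$ is a tree dual to $\mu$, the defining property of $\sim$ gives $[S]\sim[T]$; the same reasoning applied to $\mu'$ and $[T']$ gives $[S]\sim[T']$. Transitivity of $\sim$ then yields $[T]\sim[T']$. But by Theorem~\ref{thm:boundary-ff} the points of $\partial{\cal F\cal F}$ are exactly the $\sim$-classes of arational trees, so $[T]\sim[T']$ would mean that $[T]$ and $[T']$ represent the same point of $\partial{\cal F\cal F}$, contrary to assumption. Hence no such $S$ exists, and $(\mu,\mu')$ is a positive pair.

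The argument is essentially a chase through the definitions, so I do not expect a genuine obstacle; the two points deserving a word of care are: (a) that $\mu$ and $\mu'$ are nonzero, which is what allows a common zero of the length pairing to push $S$ all the way out to $\partial cv(F_n)$ rather than merely constraining it inside Outer space; and (b) that $\sim$ is an equivalence relation on all of $\partial{\rm CV}(F_n)$, not just on the arational trees, so that the intermediate tree $[S]$ — which a priori need be neither indecomposable nor arational — may legitimately appear in the chain $[T]\sim[S]\sim[T']$. Both are immediate from the framework recalled above.
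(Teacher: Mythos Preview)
Your argument is correct. The paper does not actually prove this lemma: it simply records it as Corollary~10.6 of \cite{H12}. What you have written is a self-contained derivation from the definition of the equivalence relation $\sim$ together with Theorem~\ref{thm:boundary-ff} (itself quoted from \cite{BR12,H12}), and both of the points you flag---nonvanishing of $\mu,\mu'$ so that $S$ is forced into $\partial cv(F_n)$, and the fact that $\sim$ is defined on all of $\partial{\rm CV}(F_n)$ so that the intermediate $[S]$ need not be arational---are handled correctly by the paper's setup. In effect you have unwound the relevant piece of the citation; there is nothing further to compare.
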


\subsection{Lines of minima}\label{lines}

In this subsection we introduce 
the central tool used in this paper: \emph{lines of
  minima} as defined in \cite{H14}.

For $\epsilon>0$ define 
\[{\rm Thick}_\epsilon(F_n)\]  
to be the set 
of all trees $T\in cv_0(F_n)$ with volume one
quotient so that the shortest length of any loop on 
$T/F_n$ is at least $\epsilon$. 
For the remainder of this paper we always choose 
$\epsilon >0$ sufficiently small that ${\rm Thick}_\epsilon(F_n)$
is non-empty and path connected. Clearly ${\rm Thick}_\epsilon(F_n)$ is 
invariant under the action of ${\rm Out}(F_n)$.

For a tree $T\in cv_0(F_n)$ define
\begin{equation}\label{lambda}
\Lambda(T)=\{\mu\in {\cal M\cal L}\mid
\langle T,\mu\rangle =1\}.\end{equation}
Then $\Lambda(T)$ is a  compact subset of ${\cal M\cal L}$,
and the projection $\Lambda(T)\to {\cal P\cal M\cal L}$ 
is a homeomorphism.
Let moreover
\begin{equation}\label{sigma}
\Sigma(T)=\{S\in cv(F_n)\mid
\sup\{\langle S,\mu\rangle\mid \mu\in \Lambda(T)\}=1\}.
\end{equation}

Let $(\mu,\nu)\in {\cal M\cal L}^2$ be a positive pair.
By Lemma 3.2 of \cite{H14}, the function
$S\to \langle S,\mu+\nu\rangle$ on 
${\rm Thick}_\epsilon(F_n)$ is proper. This means
that this function assumes a minimum, and 
the set 
\[{\rm Min}_\epsilon(\mu+\nu)=\{
T\in {\rm Thick}_\epsilon(F_n)\mid 
\langle T,\mu+\nu\rangle =\min\{\langle S,\mu+\nu\rangle\mid
S\in {\rm Thick}_\epsilon(F_n)\}\}\]
of all such
minima is compact. Note that this set does not change
if we replace $\mu+\nu$ by a positive multiple.

\begin{definition}\label{lineofminima}
Let $([\mu],[\nu])\in {\cal P\cal M\cal L}\times 
{\cal P\cal M\cal L}-\Delta$ be a positive pair. 
A \emph{line of minima} for $([\mu],[\nu])$
is a map $\gamma:\mathbb{R}\to {\rm Thick}_\epsilon(F_n)$
which associates to $t\in \mathbb{R}$ a point
$\gamma(t)\in {\rm Min}_\epsilon(e^{t/2}\mu+e^{-t/2}\nu)$. 
\end{definition}

A line of minima is by no means unique. If the 
measured laminations $\mu,\nu$ are dual to some
primitive conjugacy class then it may be of finite diameter. Moreover,
a line of minima is in general not continuous. 

We next introduce a class of positive pairs which define
line of minima with particularly nice properties. 
To this end define 
for a positive pair $(\mu,\nu)\in {\cal M\cal L}\times
{\cal M\cal L}$ the set 
\[{\rm Bal}(\mu,\nu)=\{T\mid \langle T, \mu\rangle=
\langle T,\nu\rangle\} \subset cv(F_n)\]
of balancing trees.

Call a primitive conjugacy class $\alpha$ \emph{basic} for
$T\in cv_0(F_n)$ 
if $\alpha$ can be represented by a loop of length at most two on the
quotient graph $T/F_n$. Note that any $T \in cv_0(F_n)$ admits a basic
primitive conjugacy class.

\begin{definition}\label{contracting}
For $B>1$, a positive 
pair of points \[([\mu],[\nu])\in 
{\cal P\cal M\cal L}\times {\cal P\cal M\cal L}-\Delta\] 
is called \emph{$B$-contracting} if for any pair
$\mu,\nu\in {\cal M\cal L}$ of representatives
of $[\mu],[\nu]$ there is some ``distinguished''
$T\in {\rm Min}_\epsilon(\mu+\nu)$ with the following properties.
\begin{enumerate}
\item $\langle T,\mu\rangle/\langle T,\nu\rangle\in [B^{-1},B]$.
\item If $\tilde \mu,\tilde \nu\in \Lambda(T)$ 
are representatives of $[\mu],[\nu]$ then 
$\langle S,\tilde \mu+\tilde \nu\rangle\geq 1/B$ for all 
$S\in \Sigma(T).$
\item Let ${\cal B }(T)\subset \Lambda(T)$ be the set of all
normalized 
measured laminations which are up to scaling induced
by a basic primitive 
conjugacy class for a tree 
$U\in {\rm Bal}(\mu,\nu)$.
Then $\langle S,\xi\rangle \geq 1/B$
for every $\xi\in {\cal B}(T)$ and every
tree \[S\in \Sigma(T)\cap
\left(\bigcup_{s\in (-\infty,-B)\cup 
(B,\infty)}{\rm Bal}(e^s\mu,e^{-s}\nu)\right).\]
\end{enumerate}
\end{definition}

\begin{remark}\label{stronger}
The requirement in part 3) of the definition
is slightly stronger than stated in \cite{H14} as in \cite{H14} 
it was assumed that the
tree $U$ is contained in $\mathrm{Thick}_\epsilon(F_n)$. We will establish
below that this stronger property serves our needs.
\end{remark}

Each $B$-contracting 
pair $(\mu,\nu)\in {\cal M\cal L}\times 
{\cal M\cal L}$ (i.e. such that the projectivized pair
$([\mu],[\nu])$ is $B$-contracting in the sense of 
Definition \ref{contracting})
defines a \emph{contracting line of minima} 
$\gamma$ by associating to each
$t\in \mathbb{R}$ a point $\gamma(t)\in 
{\rm Min}_\epsilon(e^{t/2}\mu+e^{-t/2}\nu)$
which fulfills the above definition. 
Such a contracting line of minima $\gamma$ 
is a line of minima in the sense of Definition \ref{lineofminima}.
It is not unique, 
but its Hausdorff distance (for the two-sided Lipschitz metric 
introduced below) 
to any other
choice defined by any pair $(\hat \mu,\hat \nu)\in {\cal M\cal L}\times
{\cal M\cal L}$ with $[\hat \mu]=[\mu]$ and $[\hat \nu]=[\nu]$ 
is uniformly bounded (see \cite{H14} for details).
Note that a line of minima as introduced in \cite{H14} is 
a contracting line of minima in the above sense. We hope that
this discrepancy of terminology will not cause confusion.

\begin{definition}\label{balancingfunction}
  Let $(\mu,\nu)$ be a positive pair.
	The \emph{balancing function} $f_{\mu,\nu}:
	\overline{{\rm CV}(F_n)}\to [-\infty,\infty]$ associates to a 
	projective tree $[T]$ the unique number 
	$t\in \mathbb{R}$ so that $\langle T,e^{t/2}\mu\rangle =
	\langle T,e^{-t/2}\nu\rangle$
	if such a $t$ exists, and it associates to $[T]$ the value $\infty$ 
	(or $-\infty)$ if $\langle T,\mu\rangle =0$ (or $\langle T,\nu\rangle =0$).
	\end{definition}

\begin{lemma}\label{fcont}
The balancing function $f_{\mu,\nu}$ of a positive pair $(\mu,\nu)$ is continuous.
\end{lemma}	
\begin{proof}
Choose a compact subset of $\overline{cv(F_n)}$ which projects onto $\overline{CV(F_n)}$ and 
use continuity of the length function.
\end{proof}

\begin{definition}\label{balancingprojection}
Let $(\mu,\nu)$ be a positive pair and 
$\gamma$ an associated
  line of minima. We define the \emph{balancing projection} 
  $\Pi_\gamma:cv_0(F_n)\to \gamma$ by
	\[\Pi_\gamma(T)=\gamma(f_{\mu,\nu}([T])).\]
\end{definition}

The \emph{one-sided Lipschitz metric} 
between two trees $S,T\in cv_0(F_n)$ is defined as
\begin{equation}\label{onesidedlip}
d_L(S,T)=\log \sup\left\{\frac{\langle T,\nu\rangle}{\langle S,\nu\rangle}\mid 
\nu\in {\cal M\cal L}\right\}.\end{equation}
The one-sided Lipschitz metric satisfies $d_L(S,T)=0$ only if 
$S=T$, moreover it satisfies the triangle inequality, but it is not
symmetric. The definition of the one-sided Lipschitz
metric we give here is not standard, and we refer to \cite{FM11} for 
a discussion why our definition is equivalent to the definition found
in the introduction and in other articles. 

Define the \emph{symmetrized Lipschitz metric}
\[d(S,T)=d_L(S,T)+d_L(T,S).\]
Proposition 5.2 of \cite{H14} shows the following.

\begin{proposition}\label{contraction}
For every $B>0$ there is a number $\kappa=
\kappa(B)>0$ with the
following property. Let $([\mu],[\nu])$ be a $B$-contracting
pair, let $\gamma$ be a contracting line of minima defined by $([\mu],[\nu])$ and 
let $U\in cv_0(F_n)$.
\begin{enumerate}
\item If $S\in cv_0(F_n)$ is such that 
$d(\Pi_\gamma(U),\Pi_\gamma(S))\geq \kappa$ then 
\begin{align}d_L(U,S)\geq 
d_L(U,\Pi_\gamma(U))+d_L(\Pi_\gamma(U),\Pi_\gamma(S))+
d_L(\Pi_\gamma(S),S)-\kappa\text{ and }\notag\\
d(U,S)\geq 
d(U,\Pi_\gamma(U))+d(\Pi_\gamma(U),\Pi_\gamma(S))+
d(\Pi_\gamma(S),S)-\kappa.\notag\end{align}
\item 
If $S\in \gamma(\mathbb{R})$ is such that
$d(U,S)\leq \inf_td(U,\gamma(t))+1$ then 
$d(S,\Pi_\gamma(T))\leq \kappa$.
\item For all $s<t$, 
\[\vert s-t\vert -\kappa\leq d(\gamma(s),\gamma(t))\leq 
\vert s-t\vert +\kappa.\] 
\end{enumerate}
\end{proposition}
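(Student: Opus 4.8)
The statement is Proposition~5.2 of \cite{H14}; here is how I would prove it. The underlying principle is the familiar one that a strongly contracting (coarse) geodesic is Morse, that nearest-point projection onto it is coarsely well defined, and that this projection is coarsely additive. Two features make the present situation delicate: $d$ is only the symmetrization of the asymmetric $d_L$, so each estimate must be carried out for a definite one of the two sides of $d_L$ and symmetrized only at the end; and the contraction hypothesis is not a statement about the coarse geometry of $\gamma$ but the package of length inequalities in Definition~\ref{contracting}, which has first to be converted into one. The plan is to prove (4) first, then to extract a fellow-traveling lemma from Definition~\ref{contracting}, and finally to read off (3), (1) and (2) from it.

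For (4), I would get the upper bound from the triangle inequality for $d$ along the path $r\mapsto\gamma(r)$, once I know that consecutive points $\gamma(r),\gamma(r+1)$ stay at uniformly bounded $d$-distance; this boundedness comes from the properness of $S\mapsto\langle S,\mu+\nu\rangle$ on ${\rm Thick}_\epsilon(F_n)$ (Lemma~3.2 of \cite{H14}) together with condition~(1) of Definition~\ref{contracting}, which keeps $\langle\gamma(r),\mu\rangle/\langle\gamma(r),\nu\rangle$ in $[B^{-1},B]$ as $r$ varies. For the lower bound I would test $\gamma(s)$ and $\gamma(t)$ against $e^{s/2}\mu$ and $e^{-s/2}\nu$: by the definition of the line of minima and condition~(1), the ratio $\langle\gamma(s),e^{s/2}\mu\rangle/\langle\gamma(t),e^{s/2}\mu\rangle$ differs from $e^{(s-t)/2}$ by a bounded factor, so for $s>t$ one of $\mu,\nu$ realizes $d_L(\gamma(t),\gamma(s))\ge(s-t)/2-c$ and the other realizes $d_L(\gamma(s),\gamma(t))\ge(s-t)/2-c$; summing gives the bound for $d$.

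The heart of the matter is a fellow-traveling lemma: if $T,S\in cv_0(F_n)$ have balancing parameters $t_0,t_1$, so that $\Pi_\gamma(T)=\gamma(t_0)$ and $\Pi_\gamma(S)=\gamma(t_1)$, and $|t_0-t_1|$ is large, then any $d_L$-geodesic from $T$ to $S$ stays in a uniformly bounded $d$-neighborhood of $\gamma([t_0,t_1])$ and passes $d$-close to $\gamma(t_0)$ and then to $\gamma(t_1)$. I would prove this from Definition~\ref{contracting}: a point of such a geodesic with balancing parameter $t$ is either $d$-close to $\gamma(t)$, or, after normalizing, lies in $\Sigma(\gamma(t))$, in which case condition~(2) (the lower length bound for $\tilde\mu+\tilde\nu$ on $\Sigma(\gamma(t))$) and condition~(3) (the lower length bound for the basic and balancing laminations on the sets ${\rm Bal}(e^s\mu,e^{-s}\nu)$ with $|s|>B$) force the $d_L$-increment there to be at least the change of balancing parameter, so that a $d_L$-geodesic which leaves a bounded neighborhood of $\gamma$ is too long. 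Granting this lemma, I would derive (3) by observing that a $d_L$-geodesic from $T$ to $\gamma(t_1)$, and one in the reverse direction, must each pass $d$-close to $\gamma(t_0)$, so that $d(T,\gamma(t_1))\ge d(T,\gamma(t_0))+d(\gamma(t_0),\gamma(t_1))-c$; if $\gamma(t_1)$ nearly realizes $\inf_t d(T,\gamma(t))$, this forces $d(\Pi_\gamma(T),\gamma(t_1))$ to be bounded. I would derive (1) by splitting a $d_L$-geodesic from $T$ to $S$ into the piece from $T$ to a point near $\gamma(t_0)$, the piece fellow-traveling $\gamma$ from $\gamma(t_0)$ to $\gamma(t_1)$ (whose $d_L$-length is $d_L(\Pi_\gamma(T),\Pi_\gamma(S))$ up to a bounded error), and the piece from near $\gamma(t_1)$ to $S$, and adding the $d_L$-lengths along the geodesic. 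Finally (2) follows from (1) by adding it to the same inequality with $T$ and $S$ interchanged.

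The step I expect to be the main obstacle is the fellow-traveling lemma: this is where the geometry of Outer space and the asymmetry of $d_L$ genuinely enter, and it is what dictates the exact shape of Definition~\ref{contracting}. In particular it is here that one needs the strengthened form of condition~(3), in which the auxiliary tree $U$ is only required to lie in ${\rm Bal}(\mu,\nu)$ rather than in ${\rm Thick}_\epsilon(F_n)$, since the $d_L$-geodesics that have to be controlled need not remain in the thick part. Once that lemma is available, the remaining steps are bookkeeping with the triangle inequality.
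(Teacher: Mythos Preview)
The paper does not give its own proof of this proposition: it simply cites Proposition~5.2 of \cite{H14} and adds the remark that, because the present paper uses a strengthened form of condition~(3) in Definition~\ref{contracting} (dropping the requirement that the auxiliary tree $U$ lie in ${\rm Thick}_\epsilon(F_n)$), the proof in \cite{H14} applies verbatim even when $T$ is not assumed to be thick. So there is nothing to compare on the level of argument; your sketch is going well beyond what the paper does here.

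That said, your outline is consistent with the paper's remark and correctly isolates the one genuinely new point: the reason the thick-part hypothesis on $T$ can be removed is exactly that the strengthened condition~(3) controls basic laminations for \emph{all} trees $U\in{\rm Bal}(\mu,\nu)$, not just thick ones, so the fellow-traveling/contraction estimate survives along $d_L$-geodesics that leave ${\rm Thick}_\epsilon(F_n)$. Your identification of this as the crux matches the paper's commentary precisely. If you want to turn the sketch into an actual proof you would need to carry out the details of the ``fellow-traveling lemma'' step, which is where \cite{H14} does the real work; the rest, as you say, is bookkeeping.
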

\begin{proof} Proposition 5.2 as stated in \cite{H14} requires that the tree
$U$ is contained in ${\rm Thick}_\epsilon(F_n)$. The proof uses 
the axioms in the definition of a $B$-contracting pair and applies 
axiom (3) to $U$ (compare the remark after Definition \ref{contracting}). 
No other assumption on $U$ is used. 
Since we are using a
stronger notion of a $B$-contracting pair, the 
statement holds true for all trees $U\in cv_0(F_n)$. 
\end{proof}

\section{Lines of minima and their shadows}\label{linesofminima}

The goal of this section is to show that contracting lines
of minima are strongly Morse coarse geodesics 
for the symmetrized Lipschitz distance 
whose shadows in the free factor graph are parametrized quasi-geodesics. 

Fix once and for all a number $\epsilon>0$, so that the $\epsilon$--thick part
${\rm Thick}_\epsilon(F_n)$ of Outer space is nonempty and path connected.
Let 
\[\Upsilon:cv_0(F_n)\to {\cal F\cal F}\]
be a map which associates to a tree $T$
the free factor generated by some basic primitive element for $T$
(i.e. a primitive element $\alpha\in F_n$ which can be
represented by a loop on $T/F_n$ of length at most two).
To simplify the notations we always assume from now on that 
a line of minima defined by a $B$-contracting pair is contracting.

\begin{lemma}\label{lem:balancing-curve}
  For every $B>0$ there is a number $R=R(B)>0$ with the following property.
  Let $\gamma\subset {\rm Thick}_\epsilon(F_n)$ be a line of minima
  defined by a $B$-contracting pair and let $\alpha$ be a primitive
  conjugacy class in $F_n$.  

  Suppose that $T, T'\in {\rm Thick}_\epsilon(F_n)$ are two trees for
  which $\alpha$ is basic. Then
  $$d(\Pi_\gamma(T), \Pi_\gamma(T')) \leq R.$$
\end{lemma}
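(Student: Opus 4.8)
The plan is to work directly with the two balance parameters. Write $\gamma(t)=\Pi_\gamma(T)$ and $\gamma(t')=\Pi_\gamma(T')$; by definition of the balancing projection this means $T\in{\rm Bal}(e^{t/2}\mu,e^{-t/2}\nu)$ and $T'\in{\rm Bal}(e^{t'/2}\mu,e^{-t'/2}\nu)$, where $\mu,\nu\in{\cal M\cal L}$ are representatives making $([\mu],[\nu])$ a $B$-contracting pair and $\gamma(r)\in{\rm Min}_\epsilon(e^{r/2}\mu+e^{-r/2}\nu)$ for all $r$. By construction of the line of minima, $\gamma(t)$ is a distinguished point in the sense of Definition~\ref{contracting} for the representative pair $(e^{t/2}\mu,e^{-t/2}\nu)$, and likewise $\gamma(t')$ for $(e^{t'/2}\mu,e^{-t'/2}\nu)$. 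By symmetry we may assume $t\le t'$. If $t'-t\le 2B+\log B$, then Proposition~\ref{contraction}(4) gives $d(\gamma(t),\gamma(t'))\le (t'-t)+\kappa\le 2B+\log B+\kappa$ and we are done; so from now on assume $t'-t>2B+\log B$.

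Let $\mu_\alpha\in{\cal M\cal L}$ be the measured lamination induced by $\alpha$, so that $\langle S,\mu_\alpha\rangle$ is the translation length of $\alpha$ in $S$. Since $\alpha$ is basic for $T\in{\rm Bal}(e^{t/2}\mu,e^{-t/2}\nu)$, the normalization $\hat\alpha:=\mu_\alpha/\langle\gamma(t),\mu_\alpha\rangle\in\Lambda(\gamma(t))$ lies in ${\cal B}(\gamma(t))$. Rescale $\gamma(t')$ to the unique $\lambda\gamma(t')\in\Sigma(\gamma(t))$; since $\Lambda(\gamma(t))\to{\cal P\cal M\cal L}$ is a homeomorphism one computes $\lambda=e^{-d_L(\gamma(t),\gamma(t'))}$. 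Part~1) of Definition~\ref{contracting}, applied to the distinguished point $\gamma(t')$, gives $\log\big(\langle\gamma(t'),\nu\rangle/\langle\gamma(t'),\mu\rangle\big)\in[t'-\log B,\,t'+\log B]$, so $\gamma(t')$ — and hence also $\lambda\gamma(t')$, as the balancing condition is scale-invariant in the tree — belongs to ${\rm Bal}(e^{s+t/2}\mu,e^{-(s+t/2)}\nu)$ for an $s$ with $s\ge(t'-t-\log B)/2>B$. Part~3) of Definition~\ref{contracting} (for the representative pair $(e^{t/2}\mu,e^{-t/2}\nu)$) therefore yields $\langle\lambda\gamma(t'),\hat\alpha\rangle\ge 1/B$, which after unwinding the definitions of $\lambda$ and $\hat\alpha$ reads
\[\langle\gamma(t'),\mu_\alpha\rangle\ \ge\ \tfrac1B\,e^{d_L(\gamma(t),\gamma(t'))}\,\langle\gamma(t),\mu_\alpha\rangle.\]

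Interchanging the roles of $(T,\gamma(t))$ and $(T',\gamma(t'))$ — the inequality $t'-t>2B+\log B$ makes this second application of part~3) equally legitimate — gives the companion estimate
\[\langle\gamma(t),\mu_\alpha\rangle\ \ge\ \tfrac1B\,e^{d_L(\gamma(t'),\gamma(t))}\,\langle\gamma(t'),\mu_\alpha\rangle.\]
Multiplying the two displayed inequalities and cancelling the strictly positive factor $\langle\gamma(t),\mu_\alpha\rangle\langle\gamma(t'),\mu_\alpha\rangle$ gives $1\ge B^{-2}e^{d_L(\gamma(t),\gamma(t'))+d_L(\gamma(t'),\gamma(t))}=B^{-2}e^{d(\gamma(t),\gamma(t'))}$, that is $d(\Pi_\gamma(T),\Pi_\gamma(T'))=d(\gamma(t),\gamma(t'))\le 2\log B$. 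Combining the two cases, $R:=2B+\log B+\kappa(B)$ works.

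The step I expect to need the most care is the verification in the middle paragraph that part~3) actually applies: one must check that the shift $s$ placing $\gamma(t')$ into a balanced set for the pair anchored at $\gamma(t)$ lies outside $[-B,B]$ — this is precisely where the standing assumption $t'-t>2B+\log B$ and part~1) of Definition~\ref{contracting} enter — and one must keep careful track of the two different normalizations ($\Lambda(\gamma(t))$ versus $\Sigma(\gamma(t))$) and of the factor $e^{\pm d_L}$ relating them. Once the two displayed inequalities are established the conclusion is immediate.
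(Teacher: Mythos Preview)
Your argument is correct, but it takes a genuinely different route from the paper's. The paper normalizes so that $\Pi_\gamma(T)=\gamma(0)$ and then applies property~(3) of Definition~\ref{contracting} \emph{once}, with the tree $T'$ itself (rescaled into $\Sigma(\gamma(0))$) playing the role of $S$. The point is that $\alpha$ is basic for $T'$, so $\langle T',\alpha\rangle\le 2$, while property~(3) forces $\langle T',\alpha\rangle/\langle\gamma(0),\alpha\rangle$ to be large once $T'$ is far out of balance; combined with $\langle\gamma(0),\alpha\rangle\ge\epsilon$ (since $\gamma(0)\in{\rm Thick}_\epsilon$), this bounds $d_L(\gamma(0),T')$ uniformly, and then Proposition~\ref{contraction} gives the contradiction.

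Your approach is more symmetric: you apply property~(3) \emph{twice}, once with $\lambda\gamma(t')\in\Sigma(\gamma(t))$ and once with the roles reversed, and then multiply the two resulting inequalities so that the unknown lengths $\langle\gamma(t),\mu_\alpha\rangle$ and $\langle\gamma(t'),\mu_\alpha\rangle$ cancel. This bypasses both the explicit use of the bounds $\langle T',\alpha\rangle\le 2$, $\langle\gamma(0),\alpha\rangle\ge\epsilon$, and the nontrivial contraction inequality in Proposition~\ref{contraction} (you only need part~(4) for the small-separation case). The paper's proof is shorter and uses the hypothesis that $\alpha$ is basic for $T'$ more directly; yours is a clean self-contained estimate that never leaves the line of minima. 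Both are valid.
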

\begin{proof}
  Recall that both $T$ and $T'$ are normalized so that the volume of the
  quotient graph $T/F_n,T^\prime/F_n$ 
is $1$. Let $\mu, \nu$ be the measured
  laminations defining the line
  of minima $\gamma$, 
normalized so that $T \in \mathrm{Bal}(\mu,\nu)$ and hence
$\Pi_\gamma(T)=\gamma(0)$. 


Suppose that there is a primitive conjugacy class 
$\alpha$ which is basic
for both $T$ and $T^\prime$ and let $\xi_\alpha$ be the measured lamination
dual to $\alpha$. 
Suppose furthermore that $d(\Pi_\gamma(T), \Pi_\gamma(T')) \geq R> B+\kappa$ 
  where $\kappa >0$ is as in Proposition \ref{contraction}.
    By (3) of Proposition \ref{contraction}, this implies that 
    $T' \in {\rm
    Bal}(e^s\mu,e^{-s}\nu)$ for some $|s| > B$.

  Let $c>0$ be so that $cT' \in \Sigma(\gamma(0))$. 
By property (3) in Definition \ref{contracting} and the requirement that $\alpha$ is basic
    for $T$, 
   we have
   \[ \langle T^\prime,\xi_\alpha\rangle /\langle \gamma(0),\xi_\alpha\rangle
   \geq 1/Bc.\] 
   As $\gamma(0)\in {\rm Thick}_\epsilon(F_n)$ we have
$\langle \gamma(0),\xi_\alpha\rangle \geq \epsilon$ and therefore
\[2\geq \langle T^\prime,\xi_\alpha\rangle\geq \epsilon/Bc.\]
In particular, $1/c\leq 2B/\epsilon$.

On the other hand,
 from the definitions (see the detailed discussion in Section 4
of \cite{H14}), we get
 \[d_L(\gamma(0),T^\prime)=1/c.\]
 Now by Proposition~\ref{contraction}, a large distance between the projections
  $\Pi_\gamma(T)=\gamma(0)$ and $\Pi_\gamma(T')$ implies a large distance
  between $\gamma(0)$ and $T'$, and hence a small $c$. This
  contradicts that $1/c\leq 2B/\epsilon$ and finishes the proof.
\end{proof}

\begin{proposition}\label{quasigeo}
For every $B>0$ there is a number $L=L(B)>0$ with the
following property. If $\gamma\subset
{\rm Thick}_\epsilon(F_n)$ is a line of minima defined
by a $B$-contracting pair then 
the image of $\gamma$ under the map $\Upsilon$ is 
an $L$-quasi-geodesic in ${\cal F\cal F}$.
\end{proposition}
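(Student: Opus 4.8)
The plan is to verify the two quasi-geodesic inequalities for $\Upsilon\circ\gamma$ in ${\cal F\cal F}$ separately. The upper bound is soft, and the lower bound is where Lemma~\ref{lem:balancing-curve} is used.

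For the upper bound, recall that $\Upsilon$ is coarsely Lipschitz for the symmetrized Lipschitz metric: if $d(T,T')\le 1$ then loop lengths on $T/F_n$ and $T'/F_n$ differ by a bounded factor, so a basic primitive conjugacy class for $T$ has uniformly bounded length on $T'$ and hence lies a uniformly bounded ${\cal F\cal F}$-distance from $\Upsilon(T')$; this is a standard fact (see \cite{BF14}). Composing with the estimate $d(\gamma(s),\gamma(t))\le |s-t|+\kappa$ from Proposition~\ref{contraction}(4) yields a constant $\lambda_1=\lambda_1(B)$ with $d_{{\cal F\cal F}}(\Upsilon\gamma(s),\Upsilon\gamma(t))\le \lambda_1|s-t|+\lambda_1$.

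The substance is the lower bound $d_{{\cal F\cal F}}(\Upsilon\gamma(s),\Upsilon\gamma(t))\ge |s-t|/\lambda_2-\lambda_2$. I would deduce it from a coarse retraction $P$ from free factors onto the parameter line $\mathbb{R}$ of $\gamma$. After shrinking $\epsilon$ if necessary so that $\epsilon<1/n$, every primitive conjugacy class $\alpha$ is basic for the rose in ${\rm Thick}_\epsilon(F_n)$ with $n$ equal petals one of which represents $\alpha$; and since the primitive elements of $F_n$ lying in $\langle\alpha\rangle$ are exactly $\alpha^{\pm1}$, the number $P(\alpha):=$ the parameter $u$ with $\Pi_\gamma(T)=\gamma(u)$ for such a tree $T$ depends only on the rank-one free factor $\langle\alpha\rangle$ and, by Lemma~\ref{lem:balancing-curve} together with Proposition~\ref{contraction}(4), is independent of the chosen $T$ up to an additive error $R+\kappa$. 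As $\gamma(r)$ is itself a tree in ${\rm Thick}_\epsilon(F_n)$ for which the generator $\alpha_r$ of $\Upsilon\gamma(r)$ is basic, and condition (1) of Definition~\ref{contracting} forces the balancing parameter of $\gamma(r)$ to lie within $\log B$ of $r$, we obtain $|P(\Upsilon\gamma(r))-r|\le C_1$ for some $C_1=C_1(B)$. Hence, if $P$ were coarsely Lipschitz along ${\cal F\cal F}$, then for $s<t$ we would get $|s-t|\le |P(\Upsilon\gamma(s))-P(\Upsilon\gamma(t))|+2C_1\le \lambda_2 d_{{\cal F\cal F}}(\Upsilon\gamma(s),\Upsilon\gamma(t))+\lambda_2$, and together with the upper bound $\Upsilon\gamma$ would be an $L$-quasi-geodesic for a suitable $L=L(B)$.

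It remains — and this is the main obstacle — to show that $P$ does not oscillate along ${\cal F\cal F}$: given a geodesic $B_0=\Upsilon\gamma(s),B_1,\dots,B_D=\Upsilon\gamma(t)$ in ${\cal F\cal F}$, choosing for each $i$ a primitive conjugacy class $\beta_i$ carried by $B_i$ (and equal to its generator when $B_i$ has rank one) makes $\langle\beta_i\rangle,\langle\beta_{i+1}\rangle$ a bounded ${\cal F\cal F}$-distance apart, and one must bound $|P(\beta_i)-P(\beta_{i+1})|$ uniformly. The difficulty is that $\langle\beta_i\rangle$ and $\langle\beta_{i+1}\rangle$ need not be jointly basic in any single tree, so Lemma~\ref{lem:balancing-curve} does not apply in one step. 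I would resolve this by interpolating: a basis of the smaller of $B_i,B_{i+1}$ extends to a basis of the larger, and then to a basis of $F_n$; realizing this data as a rose $T_i\in{\rm Thick}_\epsilon(F_n)$ of total volume one in which the relevant petals are short, one sees that $\beta_i$ and $\beta_{i+1}$ are each close — in ${\cal F\cal F}$ and via $\Pi_\gamma$ — to a basic class for $T_i$. Re-running the estimate in the proof of Lemma~\ref{lem:balancing-curve} — using $\langle\gamma(0),\cdot\rangle\ge \epsilon$, $\langle T_i,\cdot\rangle\le 1$, property (3) of Definition~\ref{contracting}, and, crucially, the strong contraction of $\Pi_\gamma$ in Proposition~\ref{contraction}(1)--(3) so that the errors introduced by the a priori unbounded sequence of Whitehead moves linking $\beta_i$ to $\beta_{i+1}$ get absorbed rather than accumulated — should yield the required uniform bound. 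Making this absorption precise, i.e.\ verifying that the contraction of $\Pi_\gamma$ genuinely controls the passage through higher-rank free factors, is the technical heart of the argument; alternatively one could try to invoke directly that coarse geodesics contained in the thick part of $cv_0(F_n)$ have quasi-geodesic shadows in ${\cal F\cal F}$.
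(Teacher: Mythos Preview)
Your overall strategy---building a coarse Lipschitz retraction from $\mathcal{FF}$ onto (the parameter line of) $\gamma$ via the balancing projection $\Pi_\gamma$, and using it to extract the lower quasi-geodesic bound---is exactly the paper's approach. The upper bound and the fact that $P\circ\Upsilon\circ\gamma$ is close to the identity are also handled as in the paper.

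The gap is in your verification that $P$ is coarsely Lipschitz. You correctly flag this as the crux, but then overcomplicate it: the talk of Whitehead moves, of ``absorbing'' unbounded errors through the contraction of $\Pi_\gamma$, and of possibly invoking an independent thick-shadow theorem, is neither needed nor actually carried out. The paper disposes of this step with a one-line algebraic observation. Restrict $P$ to rank-one free factors (primitive conjugacy classes); these are $2$-dense in $\mathcal{FF}$, so it suffices to bound $\vert P(\alpha)-P(\beta)\vert$ when $\langle\alpha\rangle,\langle\beta\rangle$ are at distance~$2$, i.e.\ both lie (up to conjugacy) in a common proper free factor $A$. Pick any primitive $\zeta$ in a complementary factor $A'$ with $F_n=A*A'$. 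Then $\{\alpha,\zeta\}$ extends to a free basis of $F_n$ (extend $\alpha$ to a basis of $A$, $\zeta$ to a basis of $A'$, and take the union), and likewise $\{\zeta,\beta\}$ extends to a basis. Hence there is a tree in ${\rm Thick}_\epsilon(F_n)$ for which $\alpha$ and $\zeta$ are simultaneously basic, and another for which $\zeta$ and $\beta$ are simultaneously basic. Two direct applications of Lemma~\ref{lem:balancing-curve} now give $d(\Psi(\alpha),\Psi(\zeta))\le R$ and $d(\Psi(\zeta),\Psi(\beta))\le R$, hence $d(\Psi(\alpha),\Psi(\beta))\le 2R$. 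That is the entire Lipschitz check; no contraction estimates beyond Lemma~\ref{lem:balancing-curve} are required, and your worry that adjacent $\beta_i$ ``need not be jointly basic in any single tree'' is resolved by inserting this single auxiliary element $\zeta$ rather than by any analytic absorption argument.
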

\begin{proof}
Let $([\mu],[\nu])\in {\cal P\cal M\cal L}^2$ be a $B$-contracting pair
with associated contracting line of minima $\gamma$.
Let 
\[{\cal P}\subset F_n\] be the collection of all primitive
conjugacy classes of $F_n$. Define a map
\[\Psi:{\cal P}\to \gamma\] by associating 
to $\alpha\in {\cal P}$ a point 
$\Psi(\alpha)=\gamma(t)$ as follows.
Choose a tree $T\in {\rm Thick}_\epsilon(F_n)$ 
such that $\alpha$ is basic for $T$.
Define $\Psi(\alpha)=\Pi_\gamma(T)$ where
$\Pi_\gamma:cv_0(F_n)\to \gamma$ is the 
balancing projection. By Proposition \ref{contracting} and 
Lemma~\ref{lem:balancing-curve}, this is a
coarsely well-defined map. This means that there is a
universal constant $C=C(B)>0$ such that for any other
choice $\Pi_\gamma^\prime$ of such a map 
we have $d(\Pi_\gamma(\alpha),\Pi_\gamma^\prime(\alpha))\leq C$
for all $\alpha\in {\cal P}$. 

Each element $\alpha\in {\cal P}$ generates the conjugacy class of 
a rank one free
factor of $F_n$ and hence ${\cal P}$ can be
viewed as a subset of the vertex set of 
the free factor graph.
Thus the map $\Psi$ is a map between metric spaces
where ${\cal P}$ is equipped with the restriction of the 
metric on ${\cal F\cal F}$ and where $\gamma$ is equipped with 
the restriction of the metric $d$.
We claim that $\Psi$ is 
$2R$-Lipschitz where $R=R(B)>0$ is as in Lemma \ref{lem:balancing-curve}.

To this end let $\alpha,\beta\in {\cal P}$ be primitive conjugacy classes
which generate rank one free factors $\langle \alpha\rangle$ 
and $\langle \beta \rangle$ of 
distance two in the free factor graph. Then 
up to conjugation, there is a proper free factor 
$A$ of $F_n$ so that $\langle \alpha\rangle <A,
\langle \beta\rangle <A$. As a consequence,
there are representatives $a$ of $\alpha$, $b$ of $\beta$, and
there is a primitive conjugacy class $\zeta\in {\cal P}$ 
and a representative $u$ of $\zeta$ 
such that
$a,u$ and $u,b$ can be completed to a free
basis of $F_n$.

Choose a tree $T\in {\rm Thick}_\epsilon(F_n)$ so that
both $\alpha,\zeta$ are primitive basic for $T$, and
choose a tree $S\in {\rm Thick}_\epsilon(F_n)$ so that
both $\zeta,\beta$ are primitive basic for $S$.
By Lemma~\ref{lem:balancing-curve}, $\Psi(\alpha)$ and $\Psi(\zeta)$ as
well as $\Psi(\zeta)$ and $\Psi(\beta)$ are $R$-close to
each other. Hence, $\Psi(\alpha)$ and $\Psi(\beta)$ are $2R$-close.

Now any two points $\alpha,\beta\in {\cal P}$ can be connected in ${\cal P}$ by
a sequence $(\alpha_i)_{0\leq i\leq n}$ with $\alpha_0=\alpha,\alpha_n=\beta$
whose length $n$ is not bigger than the distance between $\alpha$ and $\beta$
and such that moreover $d_{\cal F\cal F}(\alpha_i,\alpha_{i+1})\leq 2$. 

Namely, connect $\alpha$ to $\beta$ by a geodesic $(A_i)$ in ${\cal F\cal F}$. 
For each $i$ choose a rank one free factor $\alpha_i<A_i$. It now
suffices to observe that $d_{\cal F\cal F}(\alpha_i,\alpha_{i+1})\leq 2$ for all $i$.
But this follows from the fact that whenever $A,B$ are free factors 
with $d_{\cal F\cal F}(A,B)=1$ then up to exchanging $A$ and $B$
and conjugation we have $A<B$. Thus if  
$\alpha\in {\cal P}$ is a free factor of $A$, $\beta\in {\cal P}$ 
is a free factor of $B$ then $d_{\cal F\cal F}(\alpha,\beta)\leq 2$.

As the map $\Psi$ maps two primitive conjugacy classes of distance two
in ${\cal F\cal F}$ to points on $\gamma$ which are $2R$-close, the map $\Psi$
expands distances at most by a factor of $2R$ which is
what we wanted to show. Furthermore, 
the two-neighborhood of $\mathcal{P}$ is all 
of ${\cal F\cal F}$ and hence $\Psi$ can be extended to
a coarsely well defined $4R$-Lipschitz map from ${\cal F\cal F}$ into 
$\gamma$ which we denote again by $\Psi$.


Another application of Lemma~\ref{lem:balancing-curve} shows that
for every $t\in \mathbb{R}$ and any primitive basic element $\xi$ for $\gamma(t)$
we have
\[d(\Psi(\xi),\gamma(t))\leq R.\]
We conclude that 
\[d(\Psi \circ \Upsilon(\gamma(t)),\gamma(t))\leq R.\]

The map $\Upsilon:cv_0(F_n)\to 
{\cal F\cal F}$ is coarsely $M$-Lipschitz for some number $M>0$,
i.e. we have $d_{\cal F\cal F}(\Upsilon T,\Upsilon T^\prime)
\leq Md(T,T^\prime)+M$ for all 
$T,T^\prime\in cv_0(F_n)$ \cite{BF14}.
Together with the above properties of the map $\Psi$,
this shows that $\Upsilon\circ \Psi$ is a coarse
$4MR$-Lipschitz retraction of ${\cal F\cal F}$ onto
$\Upsilon(\gamma)$. 
In particular, it maps a point on $\Upsilon(\gamma)$ to 
a point of distance at most $4MR$. 

As a consequence, $\Upsilon(\gamma)$ is a parametrized 
$4MR$-quasi-geodesic in
${\cal F\cal F}$. Namely, for $s<t$ let $g:[0,N]\to\mathcal{FF}$ be a 
simplicial geodesic
joining $\Upsilon(\gamma(s))$ to $\Upsilon(\gamma(t))$. The
retractions $\Upsilon(\Psi(g(i)))$ are points on 
$\Upsilon(\gamma)$ which are of distance at
most $4MR$ apart, and the endpoints 
$\Psi(g(0)), \Psi(g(N))$ 
are of distance at most $4MR$ from
$\Upsilon(\gamma(s))$ and $\Upsilon(\gamma(t))$. 

Thus there is an edge path in ${\cal F\cal F}$ connecting
$\Upsilon(\gamma(s))$ to $\Upsilon(\gamma(t))$ 
whose image is contained in the
$4MR$-neighborhood of $\Upsilon(\gamma)$ and whose length
does not exceed $4MR d_{\cal F\cal F}(\Upsilon(\gamma(s)),
\Upsilon(\gamma(t)))$. Since $s<t$ was arbitrary
 this yields that 
indeed $\Upsilon(\gamma)$ is an $4MR$-quasi-geodesic in
${\cal F\cal F}$.
\end{proof}

Recall from Section \ref{freefacbd} that the Gromov boundary
$\partial {\cal F\cal F}$ of ${\cal F\cal F}$ consists of 
equivalence classes of arational projective trees in 
$\partial {\rm CV}(F_n)$. The equivalence relation 
$\sim$ on this set is such that
two trees $[S],[T]$ are equivalent if there is a measured
lamination $\mu$ dual to both $[T],[S]$.

\begin{corollary}\label{endpointtrees}
Let $(\mu,\nu)$ be a contracting pair defining a line of minima $\gamma$.
\begin{enumerate}
\item $\mu,\nu$ are dual to arational projective trees defining 
the endpoints of $\Upsilon(\gamma)$ in  
$\partial {\cal F\cal F}$.
\item The limit of any convergent subsequence of 
$[\gamma(t)]\subset {\rm CV}(F_n)$ as $t\to \pm \infty$
is an arational tree defining an endpoint of $\Upsilon(\gamma)$ 
in $\partial {\cal F\cal F}$. 
\end{enumerate}
\end{corollary}
\begin{proof} Let $\gamma$ be a line of minima,
defined by the $B$-contracting pair $(\mu,\nu)$. 
It follows from the defining properties of a contracting line of minima
(compare \cite{H14} for details) that 
\begin{equation}\label{productcon}
\langle \gamma(t),\mu\rangle
\to 0\quad (t\to \infty).\end{equation} 

In Section 10 of \cite{H12}, the following is shown. Let us
assume that $[T_i]\subset {\rm CV}(F_n)$ is any sequence with the 
property that $\Upsilon([T_i])\subset {\cal F\cal F}$ 
converges to a point $\xi\in \partial {\cal F\cal F}$. 
Assume furthermore that the sequence converges in $\overline{{\rm CV}(F_n)}$ to
a tree $[S]\in \partial {\rm CV}(F_n)$; then 
$[S]$ is arational and represents $\xi$. 

Thus by Proposition \ref{quasigeo}, any limit $[T]$ in $\overline{{\rm CV}(F_n)}$ 
of a sequence of projectivized  
trees $[\gamma(t_i)]$ $(t_i\to\infty)$ is an arational tree whose equivalence
class 
represents the forward endpoint of $\Upsilon(\gamma)$ in $\partial {\cal F\cal F}$.
By continuity of the length 
pairing and (\ref{productcon}), 
we have $\langle T,\mu\rangle =0$. 
\end{proof}

Recall from the introduction the definition of a strongly Morse
coarse geodesic. The next observation shows that
lines of minima are strongly Morse. 
For its formulation,
for a subset $A$ of $cv(F_n)$  we denote
by $N_M(A)$ the $M$-neighborhood of $A$ for 
the two-sided Lipschitz metric.

\begin{lemma}\label{morse2}
For all $B>0,K>1$ there is a constant $M=M(B,K)>0$ 
with the following property. Let 
$\gamma\subset {\rm Thick}_\epsilon(F_n)$ 
be a $B$-contracting line of minima. Then 
every $K$-quasi-geodesic 
$\sigma\subset cv_0(F_n)$
for the one-sided Lipschitz metric 
or for the symmetrized Lipschitz metric 
with endpoints on $\gamma$ 
is contained in $N_M(\gamma)$.
\end{lemma}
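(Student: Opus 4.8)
The plan is to use the contraction property from Proposition \ref{contraction} to show that any quasi-geodesic with endpoints on $\gamma$ cannot stray far from $\gamma$, via the standard "contracting projection implies Morse" argument adapted to the asymmetric Lipschitz setting. First I would set up the balancing projection $\Pi_\gamma:cv_0(F_n)\to\gamma$ and record the key consequence of Proposition \ref{contraction}(1),(2): if $T,S$ project to points of $\Pi_\gamma$-distance at least $\kappa$, then $d_L(T,S)$ (resp. $d(T,S)$) is bounded below by the sum $d_L(T,\Pi_\gamma T)+d(\Pi_\gamma T,\Pi_\gamma S)+d_L(\Pi_\gamma S,S)-\kappa$. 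Let $\sigma:[a,b]\to cv_0(F_n)$ be a $K$-quasi-geodesic with $\sigma(a),\sigma(b)\in\gamma$, and pick a point $p=\sigma(r)$ realizing (up to $1$) the maximal value of $d(\sigma(t),\Pi_\gamma\sigma(t))$; call this maximum $D$. The goal is to bound $D$ in terms of $B$ and $K$ alone.

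The core of the argument is to find two points $\sigma(r_-),\sigma(r_+)$ on $\sigma$ on either side of $p$ whose images under $\Pi_\gamma$ are roughly $D$ apart from $\Pi_\gamma(p)$ — this is possible because the endpoints of $\sigma$ lie on $\gamma$ itself, so $\Pi_\gamma$ moves from (near) $\Pi_\gamma\sigma(a)$ to $\Pi_\gamma\sigma(b)$ along $\gamma$, and by continuity-in-the-coarse-sense of $\Pi_\gamma$ restricted to the quasi-geodesic it must pass near $\Pi_\gamma(p)$; one can then back off distance $\approx D$ in each direction along $\gamma$ (using that $\gamma$ is itself a coarse geodesic, Proposition \ref{contraction}(4)). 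Applying the contraction estimate twice — to the pair $(\sigma(r_-),p)$ and to the pair $(p,\sigma(r_+))$ — and adding, we get
\[
d(\sigma(r_-),p)+d(p,\sigma(r_+))\;\geq\; 2D + d(\Pi_\gamma\sigma(r_-),\Pi_\gamma\sigma(r_+)) - 2\kappa \;\geq\; 2D + (2D - O(\kappa)) - 2\kappa,
\]
while on the other hand $d(\sigma(r_-),p)+d(p,\sigma(r_+)) \leq K|r_--r_+| + 2K$ and, since $\sigma$ is a $K$-quasi-geodesic and $d(\sigma(r_-),\sigma(r_+))\leq K|r_--r_+|+K$ combined with the contraction lower bound $d(\sigma(r_-),\sigma(r_+))\geq 2D - O(\kappa)$, one controls $|r_--r_+|$ linearly in $D$. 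Feeding this back, the two displayed inequalities are only compatible when $D$ is bounded by a constant $M=M(B,K)$, because the left side grows at most linearly in the quantity $|r_--r_+|\lesssim D/1$ with the quasi-geodesic constant $K$ in front, whereas the right side grows like $4D$; chasing the constants pins $D$ down. Once $p$ (the furthest point) is within $M$ of $\gamma$, every point of $\sigma$ is, so $\sigma\subset N_M(\gamma)$.

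The main obstacle I anticipate is the asymmetry of $d_L$: Proposition \ref{contraction}(1) is a statement about $d_L$ in one direction, so when $\sigma$ is a $K$-quasi-geodesic for $d_L$ (rather than for the symmetric $d$) one must be careful that the "two applications on either side of $p$" use the contraction inequality in compatible directions — going from $\sigma(r_-)$ toward $p$ and from $p$ toward $\sigma(r_+)$ — and that summing them still produces the needed $\approx 4D$ lower bound against the $\approx KD$ upper bound. This is exactly the situation Proposition \ref{contraction}(1) is designed for (it additively splits $d_L(T,S)$ through the projection), so the asymmetry should be manageable, but it requires choosing the orientation of $\sigma$ and the roles of $T,S$ correctly at each step. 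A secondary technical point is establishing that $\Pi_\gamma$ applied along $\sigma$ varies "coarsely continuously" enough to guarantee the intermediate points $\sigma(r_\pm)$ with the required projection displacement exist; this follows from the fact that $\Pi_\gamma$ is coarsely Lipschitz on bounded-distance steps (a single step of $\sigma$ has $d$-length $\leq K+K$, and by Proposition \ref{contraction}(1) applied contrapositively, $\Pi_\gamma$ moves by a bounded amount on such a step), so consecutive projections cannot jump past a window of width $\sim D$ without passing through it.
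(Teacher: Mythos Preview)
Your overall strategy---use the contraction inequality of Proposition~\ref{contraction} to deduce the Morse property---is exactly right, and it is what the paper does. But your execution of the ``furthest point'' version has a genuine gap in the inequality chase. You claim that from $d(\sigma(r_-),\sigma(r_+))\leq K|r_--r_+|+K$ together with the contraction lower bound $d(\sigma(r_-),\sigma(r_+))\geq 2D-O(\kappa)$ one ``controls $|r_--r_+|$ linearly in $D$''; but what that combination actually yields is a \emph{lower} bound $|r_--r_+|\geq (2D-O(\kappa)-K)/K$, not an upper bound. Without an upper bound on $|r_--r_+|$ your final comparison of $K|r_--r_+|+2K$ against $4D-O(\kappa)$ says nothing. (And even if you had $|r_--r_+|\leq CD$ for some $C$, the comparison $KCD\geq 4D-O(1)$ bounds $D$ only when $KC<4$, which you have not arranged.) There is also a gap in the existence of your points $r_\pm$: nothing forces the projection $\Pi_\gamma\circ\sigma$ to reach points at distance $\approx D$ from $\Pi_\gamma(p)$ on each side---for instance when the two endpoints $\sigma(a),\sigma(b)\in\gamma$ happen to be close to each other.

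The paper avoids all of this by running the standard \emph{subdivision} argument (citing \cite{S14}) rather than the furthest-point argument. One fixes $A=2\kappa K$, takes a maximal interval $[s_1,s_2]$ on which $d(\sigma(s),\gamma)\geq A$, and subdivides it by points $s_1=r_1<\dots<r_{m+1}=s_2$ with $d(\sigma(r_i),\sigma(r_{i+1}))=2A$. Since each $\sigma(r_i)$ is at distance $\geq A$ from $\gamma$ and consecutive points are only $2A$ apart, the contrapositive of Proposition~\ref{contraction}(2) forces $d(\Pi_\gamma\sigma(r_i),\Pi_\gamma\sigma(r_{i+1}))\leq\kappa$. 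One then compares the quasi-geodesic lower bound $d(\sigma(s_1),\sigma(s_2))\geq (m-1)A/K-K$ with the detour-through-$\gamma$ upper bound $d(\sigma(s_1),\sigma(s_2))\leq 2A+(m-1)\kappa$; since $A/K=2\kappa>\kappa$, this bounds $m$ uniformly in terms of $K$ and $\kappa$, hence bounds the length of the excursion and therefore $D$. The point is that the step size $2A$ is chosen so that the two linear-in-$m$ terms have coefficients $A/K$ and $\kappa$ with a definite gap; this is what makes the constants close, and it is exactly what your version is missing.
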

\begin{proof} The argument is standard; we follow
the clear proof in Lemma 3.3 of \cite{S14}.

Namely, let $\Pi_\gamma:{\rm Thick}_\epsilon(F_n)\to 
\gamma$ be the balancing projection. 
By Proposition \ref{contraction} there is a
number $\kappa=\kappa(B) >1$ with the following property.
If $d(\Pi_\gamma(x),\Pi_\gamma(y))\geq \kappa$ then
\begin{equation}\label{prop211}
d_L(x,y)\geq d_L(x,\Pi_\gamma(x))
+d_L(\Pi_\gamma(x),\Pi_\gamma(y))+d_L(\Pi_\gamma(y),y)-\kappa.
\end{equation}

We show the claim for quasi-geodesics for the one-sided
Lipschitz metric $d_L$, the case of the two-sided Lipschitz
metric follows from the same argument.
Assume without loss of generality that the quasi-geodesic 
$\sigma:[a,b]\to cv_0(F_n)$ is continuous.
Set $A=2\kappa K^2$. Let $[s_1,s_2]\subset [a,b]$ be
a maximal connected subinterval such that
$d(\sigma(s),\gamma)\geq A$ for all $s\in [s_1,s_2]$.

Let $s_1=r_1<\dots <r_m<r_{m+1}=s_2$ be such that
$d_L(\sigma(r_i),\sigma(r_{i+1}))=A$ for $i< m$ and
$d_L(\sigma(r_m),\sigma(r_{m+1}))\leq A$.
By the assumption on $\sigma\vert [s_1,s_2]$, by
the estimate (\ref{prop211}) and the fact that
$d(x,y)\geq d_L(x,y)$ for all $x,y$, we have 
\[d_L(\Pi_\gamma(\sigma(r_i)),\Pi_\gamma(\sigma(r_{i+1})))\leq
\kappa\quad \forall i.\]

Now $\sigma$ is a $K$-quasi-geodesic and hence
\[A\leq K(r_{i+1}-r_i)+K\]
and $s_2-s_1\geq A(m-1)/K-(m-1)$.
Then 
\[d_L(\sigma(s_1),\sigma(s_2)) \geq (m-1)(A/K -1)/K-K
\geq 3(m-1)\kappa-K.\]

On the other hand, summing inequality (\ref{prop211}) over all 
$i$ yields
\[d_L(\sigma(s_1),\sigma(s_2))\leq A+(m-1)\kappa=
4\kappa K^2+(m-1)\kappa.\]
This shows $2(m-1)\kappa-K\leq 4\kappa K^2$
and therefore $m$ is bounded by a number 
only depending on $B$ and $K$.
This is what we wanted to show.
\end{proof}

\begin{remark}\label{morse3}
As the number $M$ in Lemma \ref{morse2} only depends on 
$B$ and $K$, by local compactness
the statement of the lemma is also valid for continuous 
two-sided infinite quasi-geodesics 
which connect the endpoints of $\gamma$. By such a quasi-geodesic we
mean a map $\zeta$ which is a limit in the topology of uniform
convergence on compact sets of a sequence of quasi-geodesics with 
endpoints $\gamma(a_i),\gamma(b_i)$ and such that $a_i\to -\infty$,
$b_i\to \infty$. 
\end{remark}

\section{The endpoints of a contracting line of minima}\label{endpoints}

In Section \ref{linesofminima} we showed that a contracting line of 
minima projects to a parame\-trized quasi-geodesic in the free factor graph.
In particular, this quasi-geodesic admits a pair of endpoints in 
the boundary $\partial {\cal F\cal F}$ of the free factor graph.
The goal of this section is to relate these endpoints to the defining 
measured laminations of the contracting line of minima.

Observe first that 
Corollary \ref{endpointtrees} does not imply 
that there is an endpoint of a contracting line of minima 
$\gamma$ in $\partial {\rm CV}(F_n)$, ie
that as $t\to \infty$, 
$[\gamma(t)]$ converges in the topology of
$\overline{{\rm CV}(F_n)}$ to a point in $\partial {\rm CV}(F_n)$. 
In particular, the notion of convergence used in Remark \ref{morse3} 
does not necessarily coincide with convergence in 
$\overline{ {\rm CV}(F_n)}$.

To understand convergence in $\overline{{\rm CV}(F_n)}$
we use  
\emph{fast folding paths} as our main tool. 
Following \cite{FM11}, we define such a fast folding path 
connecting two points $S,T\in cv_0(F_n)$ as follows.
In the simplex defined by $S$ (which consists of all normalized trees
obtained from $S$ by rescaling the edges),
there is a point $S^\prime$ and 
an optimal train track map $S^\prime\to T$. This train track 
map then induces a folding path connecting $S^\prime$ to $S$.
Compose this path with a rescaling path connecting 
$S$ to $S^\prime$ and call the resulting path a fast
folding path from $S$ to $T$. 

There is a notion of parametrization by arc length of fast folding
paths. Fast folding paths 
parametrized by arc length are geodesics
for the one-sided Lipschitz metric \cite{FM11} and hence 
by Lemma \ref{morse2} we have

\begin{corollary}\label{fastfolding}
For every $B>0$ there is a number $M(B)>0$ with
the following property. Let $\gamma$ be a $B$-contracting line of minima.
\begin{enumerate}
\item 
For any two points $x,y\in\gamma$, 
the Hausdorff distance between a fast
folding path connecting $x$ to $y$ and the subsegment
of $\gamma$ connecting $x$ to $y$ is at most 
$M(B)$. 
\item There exists a fast folding path $\zeta:\mathbb{R}\to cv_0(F_n)$ whose
Hausdorff distance to $\gamma$ is at most $M(B)$.
\end{enumerate}
\end{corollary}
\begin{proof}
Let $\gamma$ be a $B$-contracting 
line of minima, defined by a $B$-contracting pair
$([\mu],[\nu])\in {\cal P\cal M\cal L}\times 
{\cal P\cal M\cal L}$.  
For some $s<t$ connect $\gamma(s)$ to 
$\gamma(t)$ by a fast folding path $\zeta$. 
Since such a path is a geodesic 
for the one-sided Lipschitz metric, by Corollary \ref{fastfolding},
the image of the path is contained in a uniformly bounded neighborhood of
$\gamma$. In particular, this image is contained in ${\rm Thick}_\delta(F_n)$ for a
number $\delta >0$ only depending on $B$.

Proposition \ref{contraction} shows that a $B$-contracting line of minima
is a $C$-coarse geodesic for the one-sided Lipschitz metric 
where $C>0$ only depends on $B$. This implies that
the Hausdorff distance between $\gamma[s,t]$ and $\zeta$ is 
unformly bounded. Namely, otherwise there is a long 
subsegment $\gamma_0$ of $\gamma[s,t]$ not contained in a uniformly bounded
neighborhood of $\zeta$. We may assume that the distance between
the endpoints of $\gamma_0$ and $\zeta$ is uniformly bounded. 
But $\zeta$ is a geodesic for the one-sided
Lipschitz metric contained in a uniformly bounded
neighborhood of $\gamma_0$ and therefore the distance between
the endpoints of $\gamma_0$ has to be uniformly bounded. 
As $\gamma$ is a coarse geodesic, this implies that the length of 
$\gamma_0$ is uniformly bounded which is what we wanted to show. 

Now let $\beta_n$ be a fast folding path connecting
$\gamma(-n)$ to $\gamma(n)$. 
Apply the Arzela Ascoli theorem for folding paths to the paths $\beta_n$ 
(we refer to Proposition 3.7 of \cite{H12} and its proof 
for an explanation why this is possible) and
obtain a biinfinite fast folding path $\beta$ whose Hausdorff distance
to $\gamma$ is bounded by a constant only depending on $B$.
\end{proof}

\begin{remark}\label{morseok}
Corollary \ref{fastfolding} is valid for any strongly Morse coarse
geodesic in ${\rm Thick}_\epsilon(F_n)$.
\end{remark}

The following definition is taken from Section 2 of \cite{H14}.

\begin{definition}\label{uniqueergodic}
A projective tree $[T]\in \partial {\rm CV}(F_n)$ is \emph{doubly uniquely
ergodic} if the following two conditions are satisfied.
\begin{enumerate}
\item There exists a unique projective measured lamination $[\mu]
\in {\cal P\cal M\cal L}$ which is dual to $[T]$.
\item If $[\mu]$ is dual to $[T]$ and if $[S]$ is dual to $[\mu]$ then
$[S]=[T]$.
\end{enumerate}
\end{definition}

Denote by ${\cal U\cal T}\subset \partial {\rm CV}(F_n)$ the 
${\rm Out}(F_n)$-invariant set of 
doubly uniquely ergodic trees. Lemma 2.9 of \cite{H14} shows
that a fixed point in $\partial {\rm CV}(F_n)$ 
of any iwip element of ${\rm Out}(F_n)$ is contained in ${\cal U\cal T}$.
Moreover, by Corollary 2.9 of \cite{H14}, the action of ${\rm Out}(F_n)$ on 
the closure of ${\cal U\cal T}$ in $\partial {\rm CV}(F_n)$ 
is minimal.

As a consequence of Proposition \ref{quasigeo},
Corollary \ref{fastfolding} and Theorem 1.1 of 
\cite{NPR14} we can now state that indeed, a contracting line of minima
$\gamma$ has two endpoints in $\overline{{\rm CV}(F_n)}$. 
This means that if $\gamma$ is such a line of minima, then
$[\gamma(t)]\subset {\rm CV}(F_n)$ converges as $t\to \infty$ to 
an endpoint tree in $\partial {\rm CV}(F_n)$. 
 
\begin{proposition}\label{prop:uniquely-ergodic}
Any contracting 
line of minima has a pair of endpoints in $\partial {{\rm CV}(F_n)}$, 
and any such endpoint
is doubly uniquely ergodic.
\end{proposition}
\begin{proof} Let $\gamma$ be a $B$-contracting line of minima. 
By Corollary \ref{fastfolding}, there exists a fast folding path $\beta$
whose Hausdorff distance to $\gamma$ is at most $M(B)$.
Now $\gamma$ projects to a uniform quasi-geodesic in ${\cal F\cal F}$ 
and hence the same holds true for $\beta$. Thus
the assumptions of Theorem 1.1 of \cite{NPR14} are fulfilled.

An application of this result yields that 
there is a doubly uniquely ergodic tree $[T]\in {\cal U\cal T}\subset 
\partial {\rm CV}(F_n)$ such that
$[\beta(t)]\to [T]$ in $\overline{{\rm CV}(F_n)}$. 
The tree $[T]$ is necessarily arational (see the proof of 
Corollary \ref{endpointtrees} for details), and it is the unique
projective tree in $\partial {\rm CV}(F_n)$ defining the forward endpoint
in $\partial {\cal F\cal F}$ 
of the quasi-geodesic ray $\Upsilon(\gamma[0,\infty))$. 
Corollary \ref{endpointtrees} now shows that
$\gamma(t)\to [T]$.

Now recall that $\gamma$ is defined by the $B$-contracting 
pair $([\mu],[\nu])\in {\cal P\cal M\cal L}\times {\cal P\cal M\cal L}$. 
It follows from the definition of a $B$-contracting line of minima
that $\langle \gamma(t),\mu\rangle\to 0$ $(t\to \infty)$. By continuity of 
the length function, this implies that $\mu$ is dual to $T$. 
\end{proof}

The following result is the
main characterization of convex cocompact subgroups of ${\rm Out}(F_n)$.
It gives a sufficient criterion for a subgroup to be convex cocompact.
That this characterization is also necessary follows from 
the results in Section \ref{freefactor}. 

For the purpose of its proof and for later use, 
following Definition 3.1 of \cite{H14} we call a family $F$ 
of non-negative functions $\rho$ on $cv_0(F_n)$ 
\emph{uniformly proper} if for every $a>0$ 
there is a compact subset $C(a)$ of ${\rm Thick}_\epsilon(F_n)$
such that $\rho^{-1}[0,a]\cap {\rm Thick}_\epsilon(F_n)\subset
C(a)$ for every $\rho\in F$.

\begin{proposition}\label{linesofminconvex}
Let $\Gamma<{\rm Out}(F_n)$ be a word hyperbolic subgroup with the
following properties.
\begin{enumerate}
\item There is a $\Gamma$-equivariant homeomorphism of the Gromov boundary
$\partial \Gamma$ of $\Gamma$ onto a compact subset $\Lambda$
of ${\cal U\cal T}$. 
\item There is some $B>0$ so that 
for any two points $[S]\not=[T]\in \Lambda$, the pair of dual projective
measured laminations $([\mu],[\nu])$ for $[S],[T]$ is a $B$-contracting pair.
\end{enumerate}
Then $\Gamma$ is convex cocompact.
\end{proposition}
\begin{proof}
Let $\gamma$ be a contracting line of minima defined by the 
$B$-contracting pair 
$(\mu,\nu)$, with balancing projection $\Pi_\gamma$. 
Recall that this projection associates to a tree $S$ the unique point 
$\Pi_\gamma(S)=\gamma(t)$, determined by the requirement that
$f_{\mu,\nu}([S])=0$ (notations as in Section 2).

By Proposition \ref{prop:uniquely-ergodic}, the are unique projective trees
$[T_\mu],[T_\nu]\in \partial {\rm CV}(F_n)$ which are dual to $\mu,\nu$.
Thus the balacing projection $\Pi_\gamma$ extends to 
a map 
\[\Pi_\gamma:\overline{{\rm CV}(F_n)}-\{[T_\mu],[T_\nu]\}\to \gamma(\mathbb{R}).\]

Let ${\cal T}_B$ be the space of triples of points in ${\cal P\cal M\cal L}$
with the property that any pair from this triple is 
$B$-contracting. Let 
$([\mu],[\nu],[\xi])\in {\cal T}_B$ and 
let $[U]\in {\cal U\cal T}$ be the tree which is
dual to $[\xi]$.
Choose representatives $\mu,\nu$ of $[\mu],[\nu]$ such 
$f_{\mu,\nu}[U]=0$. 
Let $\gamma$ be a contracting 
line of minima, defined by the $B$-contracting
pair $(\mu,\nu)$; then 
$\Pi_\gamma([U])=\gamma(0)$.

Let $\zeta$ be a contracting line of minima defined by the 
contracting pair $([\mu],[\xi])$. 
By Proposition \ref{prop:uniquely-ergodic}, $[\zeta(t)]\to [U]$ 
$(t\to -\infty)$ in $\overline{ {\rm CV}(F_n)}$. 
Now $f_{\mu,\nu}([U])=0$ and hence by continuity of the function 
$f_{\mu,\nu}$ established in Lemma \ref{fcont} and 
the definitions, we have
$\Pi_\gamma([\zeta(s)])\in \gamma[-1,1]$ for sufficiently small $s$.

By the definition of a $B$-contracting pair, 
applied to the pair $([\mu],[\nu])$, we obtain that  
for sufficiently small $s$,  
any fast folding path connecting
$\zeta(s)$ to a tree $T\in \partial cv(F_n)$ which is dual
to $\mu$ (recall that such a tree is unique up to scale) 
passes through a uniformly bounded neighborhood of 
$\gamma(0)$. An application of Corollary \ref{fastfolding}
then yields that the line of minima
$\zeta$ passes through a uniformly bounded neighborhood of 
$\gamma(0)$. Furthermore, the Hausdorff distance
between $\gamma[0,\infty)$ and a half-ray of $\zeta$ is
uniformly bounded, and $\gamma[0,\infty)$ is up to a uniformly
bounded error the largest subray of $\gamma$ with this property.

An application of this reasoning to a 
line of minima $\rho$ which is defined 
by the $B$-contracting pair $([\nu],[\xi])$ now shows that
$\rho$ passes through a uniformly bounded neighborhood of 
$\gamma(0)$ is as well. Furthermore, the set of points
in $cv_0(F_n)$ which are uniformly close to all three lines
of minimal $\gamma,\zeta,\rho$ has uniformly bounded diameter.

As a consequence, there is a 
coarsely well defined map 
\[\Theta:{\cal T}_B\to 
{\rm Thick}_\epsilon(F_n)\] which maps 
an ordered triple $([\mu],[\nu],[\xi])\in {\cal T}_B$ 
to some point $\Theta([\mu],[\nu],[\xi])\in 
{\rm Thick}_\epsilon(F_n)$ which is uniformly close to 
all three contracting lines of minimal defined by the
three different pairs of points in the triple. 
The map $\Theta$
depends on choices, but there is a number $D>0$ such that
for any other choice $\Theta^\prime$, we have
\[d(\Theta([\mu],[\nu],[\xi]),\Theta^\prime([\mu],[\nu],[\xi]))\leq D
\text{ for all  }([\mu],[\nu],[\xi])\in {\cal T}_B.\] Moreover, 
for $\phi\in {\rm Out}(F_n)$ we have
$\Theta(\phi[\mu],\phi[\nu],\phi[\xi])=\phi(\Theta([\mu],[\nu],[\xi]))$
coarsely (ie up to replacing points by points of uniformly 
bounded distance). 
The map $\Theta$ also is coarsely invariant under 
permutations of the three variables.

Let now $\Gamma<{\rm Out}(F_n)$ be as in the proposition.
Let $F:\partial \Gamma\to \Lambda\subset {\cal U\cal T}$ 
be the equivariant homeomorphism whose existence is assumption 
(1) of the proposition. Let 
${\cal H}_\Gamma$ be the closure of the collections of all lines
of minima defined by any two distinct points in $\Lambda$.
The set ${\cal H}_\Gamma$ is a closed $\Gamma$-invariant subset of 
${\rm Thick}_\epsilon(F_n)$. 

The group $\Gamma$ is word hyperbolic and hence it acts properly 
and cocompactly on the space of triples of distinct points in 
$\partial \Gamma$. Let $A$ be a compact fundamental domain for 
this action. 
The subset $F^3(A)\subset {\cal T}_B$ is mapped by 
$\Theta$ to a subset of ${\cal H}_\Gamma\subset 
{\rm Thick}_\epsilon(F_n)$
of uniformly bounded diameter. 

Namely, to a triple $([\mu],[\nu],[\xi])\in {\cal T}_B$ associate the
pair 
\[G([\mu],[\nu],[\xi])=(\mu,\nu)\] 
of representatives of $[\mu],[\nu]$ with the 
following two properties. 
\begin{enumerate}
\item Let $[U]\in {\cal U\cal T}$ be the tree which is dual to $[\xi]$;
then $\langle U,\mu\rangle =\langle U,\nu\rangle$.
\item ${\rm Min}\{\langle S,\mu+\nu\rangle\mid S\in {\rm Thick}_{\epsilon}(F_n)\}=1.$
\end{enumerate}
It is immediate from the earlier discussion that the map $G$ is continuous
and therefore the family of functions
\[{\cal F}=\{\langle \cdot ,\mu +\nu \rangle \mid 
(\mu,\nu)\in GF^3(A)\}\]
is compact. Furthermore, ${\cal F}$ is uniformly positive (compare 
\cite {H14}) and hence uniformly proper.

Now the map $\Theta$ can be chosen to associate to 
a point $z\in {\cal T}_B$ a point in ${\rm Min}_{\epsilon}(\mu+\nu)\subset 
{\rm Thick}_\epsilon(F_n)$ 
where $G(z)=(\mu,\nu)$. 
From this the diameter bound of the image of $F^3A$ under 
$\Theta$ is immediate.

In particular, its closure $K$ is compact. 
Thus by coarse equivarianc of the 
map $\Theta$, $\Gamma$ acts on ${\cal H}_\Gamma$ 
cocompactly. The action is proper as well since $\Gamma$ acts
properly on ${\rm Thick}_\epsilon(F_n)$. 

Choose a path connected closed neighborhood 
$U$ of the compact set $K$ so that the union of the $\Gamma$-translates of this
set is a path connected closed neighborhood $\Omega$ 
of ${\cal H}_\Gamma$ 
on which $\Gamma$ acts properly and cocompactly.

Equip $\Omega$ with 
a $\Gamma$-invariant length metric. As $\Gamma$ acts on 
$\Omega$ cocompactly, for $x\in \Omega$
the orbit map $g\in \Gamma\to gx\in \Omega$ is a quasi-isometry.

Let $\gamma$ be a geodesic in $\Gamma$ with endpoints
$\gamma(-\infty)\in \partial \Gamma,\gamma(\infty)\in \partial \Gamma$.
There is a corresponding $B$-contracting 
line of minima $\zeta$ in ${\cal H}_\Gamma$ connecting
$F(\gamma(-\infty))$ to $F(\gamma(\infty))$, 
and this line of minima is a 
$c$-coarse geodesic in 
$cv_0(F_n)$ for the symmetrized Lipschitz metric
for a number $c>0$ not depending on $\gamma$.
In particular, it is a $c$-coarse geodesic in $\Omega\supset {\cal H}_\Gamma$ equipped with
the intrinsic path metric. 

As an orbit map $\Gamma\to \Omega$ is a quasi-isometry, the 
contracting line of minima 
$\zeta$ determines an equivalence class of uniform quasi-geodesics
in $\Gamma$, where two quasi-geodesics are equivalent if and 
only if their Hausdorff distance is uniformly bounded.
We claim that the geodesic $\gamma$ is contained in this class.

To this end note that 
by Proposition \ref{prop:uniquely-ergodic},
as $t\to \pm \infty$ the projective trees $[\zeta(t)]$ converge in 
$\overline{{\rm CV}(F_n)}$ to $F(\gamma(\pm \infty))$.
By hyperbolicity of $\Gamma$, the equivalence class of $\gamma$ consists precisely
of quasi-geodesics in $\Gamma$ with the same endpoints 
in $\partial {\cal F\cal F}$ as $\gamma$ and hence 
the geodesic $\gamma$ is contained in this class. 
As a consequence, for some fixed 
$x\in \zeta$, the orbit 
$\gamma x$ is contained in a uniformly
bounded neighborhood of $\zeta$.

Recall that the map $\Upsilon:
cv_0(F_n)\to {\cal F\cal F}$ is coarsely Lipschitz and
coarsely ${\rm Out}(F_n)$-equivariant, and it
maps $\zeta$ to a parametrized uniform quasi-geodesic in 
${\cal F\cal F}$. Together with 
Lemma \ref{morse2} and Remark \ref{morse3}, this yields 
that an orbit map 
$g\in \Gamma\to gA\in {\cal F\cal F}$ $(A\in {\cal F\cal F})$ is 
a quasi-isometric embedding.
\end{proof}

\section{Free factor graph and Outer space}\label{freefactor}

In this section we consider Lipschitz paths in Outer space which 
project to parametrized quasi-geodesics in the free factor graph.
The endpoints of such a path in $\partial {\cal F\cal F}$ 
is a pair of equivalence classes of arational trees. We show that
a pair of dual laminations for any two representatives of such trees
is a $B$-contracting pair.

The main result is the following
\begin{proposition}\label{prop:char-endpoints}
  For every $L>1$ there exists a number $B=B(L)>0$ with the following property.
  Suppose that $\gamma:\mathbb{R}\to cv_0(F_n)$ is a one-Lipschitz 
path which projects
  to an $L$--quasi-geodesic in $\mathcal{FF}$. Then
  the arational trees which define the endpoints of $\gamma$ 
  in $\partial {\cal F\cal F}$ 
  admit a unique dual measured lamination up to scale, and 
  the pair $(\mu,\nu)$ of these 
  laminations is $B$--contracting. Furthermore, the Hausdorff distance between  
  $\gamma$ and the line of minima $\zeta$ defined by $(\mu,\nu)$ 
  is at most $D(L)$
	where $D=D(L)$ only depends
  on $L$.
\end{proposition}

We break the proof of this proposition into several lemmas which will also
be useful for the extension of Proposition \ref{prop:char-endpoints} formulated in 
Corollary \ref{localstability}.

Consider the space $A$ of 
  finite, one-sided infinite or biinfinite 
  one-Lipschitz paths $\gamma:J\to cv_0(F_n)$, 
  parametrized on a connected
  closed interval $J\subset \mathbb{R}$ containing $0$,  
  whose image under the projection $\Upsilon:cv_0(F_n)\to {\cal F\cal F}$
  is an $L$-quasigeodesic.
  
\begin{lemma}\label{containedinthick}
There are numbers $\epsilon>0,M_0>0$ with the following property.
If $\gamma\in A$ is parametrized on an interval $J$ of length 
 $\vert J\vert \geq M_0$ then $\gamma(J)\subset 
 {\rm Thick}_\epsilon(F_n)$. 
 \end{lemma}
 \begin{proof}
 Let $T\in cv_0(F_n)$ and let $\alpha=\Upsilon(T)$; then 
 $\alpha$ is a primitive conjugacy class whose length on $T$ 
is at most two.

 The diameter in ${\cal F\cal F}$ of the set of all primitive conjugacy
 classes whose length on $T$ is at most two is uniformly bounded,
 independent of $T\in cv_0(F_n)$ 
 (see \cite{H12} for details). Let $k>0$ be such an upper bound.
 We may assume that $k\geq L$. 
 
 Let $M=4kL$; if $S\in cv_0(F_n)-{\rm Thick}_{e^{-M}}(F_n)$ 
 then there exists a primitive
 conjugacy class $\alpha$ of length at most $e^{-M}$ on $S$.
 Furthermore, by the definition (\ref{onesidedlip}) of the 
one-sided Lipschitz metric,
the length of $\alpha$ on a tree $T$ with $d_L(S,T)\leq M$ is at most
 one. This implies that the diameter in ${\cal F\cal F}$ of 
the set of all primitive
 conjugacy classes whose length on either $S$ or $T$ is at 
most two does not exceed $2k$.
 
 However, if $\gamma:J\to cv_0(F_n)$ is a path in $A$ 
 with $\vert J\vert \geq 2M$ and if $s\in J$ is such that 
$\gamma(s)=S$ then there
 is a point $t\in J$ with $\vert s-t\vert =M$. The above discussion shows that
 $d(\Upsilon(\gamma(s)),\Upsilon(\gamma(t)))\leq 2k$. 
On the other hand, as $\gamma\in A$, 
 we have $d(\Upsilon(\gamma(s)),\Upsilon(\gamma(t)))\geq M/L-L\geq 3k$.
 This contradiction completes the proof of the lemma for $M_0=2M$ and
$\epsilon=e^{-M}$.
 \end{proof}
 
 From now on we assume by abuse of notation that the set $A$ only contains
 paths $\gamma:J\to cv_0(F_n)$ with $J\supset [-M_0,M_0]$ where $M_0>0$ is as
 in Lemma \ref{containedinthick}. 
Then $\gamma(J)\subset {\rm Thick}_\epsilon(F_n)$
 for all $\gamma\in A$ where $\epsilon>0$ is as in 
Lemma \ref{containedinthick}. We equip $A$ with the topology of 
uniform convergence on compact sets.

The group ${\rm Out}(F_n)$ acts on ${\rm Thick}_\epsilon(F_n)$ properly
and cocompactly. Let 
\[K_0\subset {\rm Thick}_\epsilon(F_n)\] be 
a compact fundamental domain for this action. 
The action of ${\rm Out}(F_n)$ on $A$ is  
cocompact as well:
  given any sequence $\gamma_i$ in $A$, choose
  elements $\phi_i\in\mathrm{Out}(F_n)$ so that $\phi_i 
  \gamma_i(0) \in K_0$. 
  The 
  desired compactness follows from the Arzela-Ascoli theorem. Thus if we denote
  by $A_0$ the subset of $A$ consisting of paths $\gamma$ 
with $\gamma(0)\in K_0$, then for the purpose of 
Proposition \ref{prop:char-endpoints},   
  by invariance under the action of ${\rm Out}(F_n)$ 
it suffices to investigate the set $A_0$. 

Let  
${\cal F\cal T}\subset \partial CV(F_n)$ be the set of all 
arational trees and let
\[\Pi:{\cal F\cal T}\to \partial {\cal F\cal F}\]
be the natural ${\rm Out}(F_n)$-equivariant projection. 

\begin{lemma}\label{compactone}
Let 
${\cal Q}\subset \partial {\cal F\cal F}$ be the set of all 
endpoints of biinfinite paths in $A_0$,
viewed as quasi-geodesics in ${\cal F\cal F}$.
The set $\Xi=\Pi^{-1}{\cal Q}\subset \partial 
{\rm CV}(F_n)$ is compact.
\end{lemma}
\begin{proof}
Since $\partial {\cal F\cal F}$ is metrisable and 
$A_0$ is sequentially compact, it follows from the definition
of the Gromov topology on $\partial {\cal F\cal F}$ that 
${\cal Q}$ is sequentially compact and hence compact. 

Our goal is to show that $\Xi=\Pi^{-1}({\cal Q})$ is a compact
subset of ${\cal F\cal T}\subset \partial {\rm CV}(F_n)$, and for this 
it suffices to show that 
$\Xi$ is sequentially compact. 
To this end take a sequence $[T_i]\subset \Xi$ which limits to a tree 
$[T]\in \partial {\rm CV}(F_n)$. As ${\cal Q}$ is compact, up to a 
passing to a subsequence,
there is an element $\xi\in {\cal Q}$ so that $\Pi([T_i])$ converges to $\xi$. 
Now for each $i$ choose a measured lamination $\nu_i$ dual to $[T_i]$. 
Since ${\cal P\cal M\cal L}$ is compact, up to passing to a subsequence
and normalization we may assume that $\nu_i\to \nu\in {\cal M\cal L}$. 
By continuity of the length
pairing, $[T]$ is dual to $\nu$. On the other hand, as $\Pi([T_i])\to \xi$, we have
$\langle S,\nu\rangle=0$ if and only if $[S]\in {\cal F\cal T}$ 
and $\Pi(S)=\xi$. 
Thus indeed, $\Xi$ is compact. 
\end{proof}

For a path $\gamma:J\to cv_0(F_n)$ in the set $A_0$ 
define a \emph{pair of projective ending laminations} to be
an ordered pair $([\mu],[\nu])\in {\cal P\cal M\cal L}^2$ so that
the following holds true. Assume first that
$J=[-b,a]$ for some $a<\infty$; 
we then require that $[\mu]$ is the projective class of a lamination
which is dual to a primitive basic conjugacy class for $\gamma(a)$.
If $[0,\infty)\subset J$ then we require that $[\mu]$ is 
dual to a tree in $\Pi^{-1}(\Upsilon\gamma(\infty))$
(note that $\Upsilon\gamma$ has well defined endpoints in 
$\partial {\cal F\cal F})$.
Define similarly $[\nu]$ for the backward ray
$\gamma(J\cap (-\infty,0])$.

Let $P_0\subset {\cal P\cal M\cal L}^2$ be the set of pairs 
$([\mu],[\nu])$ of projective measured laminations which are
pairs of projective ending laminations for all paths in $A_0$.

\begin{lemma}\label{positive}
Up to perhaps increasing the number $M_0$ in the definition of $A_0$,
the set $P_0\subset {\cal P\cal M\cal L}^2$ is compact
and consists of positive pairs.
\end{lemma}
\begin{proof} For compactness of $P_0$, 
 it suffices to establish sequential compactness.
To this end let $([\mu_i],[\nu_i])$ be a sequence of points in 
$P_0$ defined by maps $\gamma_i:J_i\to cv_0(F_n)$ in $A_0$. 
Assume first that $J_i=[b_i,a_i)$ for some $b_i\leq -M$ 
and that $a_i\to a$ for some $a<\infty$. 
Up to passing to a subsequence, we may assume that 
$\gamma_i\to \gamma$ in $A_0$. 
Then $\gamma_i(a_i)\to \gamma(a)$, furthermore for large $i$ the 
forward endpoint 
laminations $[\mu_i]$ of $\gamma_i$ 
are dual to a primitive basic conjugacy class $\alpha_i$ 
for $\gamma_i(a_i)$. 

Now $\gamma\subset {\rm Thick}_\epsilon(F_n)$ and hence the 
number of primitive conjugacy classes which are basis for
a tree in a small neighborhood of $\gamma(0)$ is finite.
Thus for infinitely many $i$, the conjugacy class $\alpha_i$ 
coincides with a primitive basic conjugacy class for $\gamma(0)$
and hence the same is true for 
the dual lamination. This is what we wanted to show.

The reasoning for an infinite endpoint is the same, using again
continuity of the length function. 
This shows 
compactness of $P_0$.  

We are left with showing that up to perhaps increasing
the number $M_0$ in the definition of the set $A_0$, the set 
$P_0$ consists of positive pairs. To this end note that
the subset $Q$ of $P_0$ of
pairs $([\mu],[\nu])$ consisting
of projective laminations which are 
dual to a pair of endpoints of a biinfinite path in $A_0$ is 
compact and consists of positive pairs by 
Lemma \ref{lemma:dual-lam-positive-pair}. 
Furthermore, as $N\to \infty$, the sets $Q_N$ of pairs of 
projective laminations defined
by paths in $A_0$ for intervals $J\supset [-N,N]$ define
a neighborhood basis of $Q$ in $P_0$. 
Thus by Lemma \ref{positiveopen}, 
there exists $M>0$ so that $Q_M$ consists of positive pairs
as claimed.
\end{proof}

Define 
\[R=\{(\mu,\nu)\in {\cal M\cal L}^2
\mid ([\mu],[\nu])\in P_0,\langle T,\mu\rangle=
\langle T,\nu\rangle =1 \text{ for some }T\in K_0\}.\]
Continuity of the length pairing and compactness of $P_0$ and
$K_0$ show that $R$ is compact.

By Lemma \ref{positive} and Lemma 3.2 of \cite{H14}, 
  the family of functions
  $\{\langle \cdot ,\mu+\nu\rangle\mid 
  (\mu,\nu)\in R\}$ on $cv_0(F_n)$ is uniformly proper. 
  Thus the closure 
  \[W=\overline{\bigcup_{(\mu,\nu)\in R}{\rm Min}_\epsilon(\mu+\nu)}\] 
  is compact. 

The following observation is immediate from continuity.

\begin{lemma}\label{firstprop}
There is a number
  $B_1>0$ such that 
  \[\frac{\langle T,\mu\rangle}{\langle T,\nu\rangle}\in 
  [B_1^{-1},B_1]\]
  for all $T\in W$ and all $(\mu,\nu)\in R$. 
\end{lemma}

{\it Proof of Proposition \ref{prop:char-endpoints}}.
  Using the above notations, let $P\subset R$ be the 
subset of $R$ of all pairs $(\mu,\nu)$ which are dual to 
a pair of arational trees (ie which correspond to pairs of ending
laminations of biinfinite paths in $A_0$).
  Our goal is to show that each $(\mu,\nu)\in P$ is a 
  $B$-contracting pair for some fixed number $B>0$. 

To this end 
  we now use an argument from the proof of Proposition 3.8 of \cite{H14}.
 Namely, using the above notation,
the first requirement in the definition of a $B$-contracting
pair is immediate from Lemma \ref{firstprop} and equivariance
for $B=B_1$.

  For $S\in {\rm Thick}_\epsilon(F_n)$ 
  let 
  \[\Lambda(S)=\{\nu\in {\cal M\cal L}\mid \langle S,\nu\rangle =1\}.\]
If $S\in W$ and if 
$\tilde \mu,\tilde \nu\in \Lambda(S)$ 
are rescalings  of $(\mu,\nu)\in P$ then
using once more positivity, continuity and 
compactness, we have
$\langle U,\tilde \mu+\tilde \nu\rangle\geq 1/B_2$ 
for all 
\[U\in \Sigma(S)=\{V\mid \max\{\langle V,\nu\rangle \mid
\nu\in \Lambda(S)\}=1\}\]
where $B_2>0$ does not depend on $S\in W$ and 
$(\mu,\nu)\in P$.
Thus the second requirement in the definition of a 
$B$-contracting pair holds true for $P$.

For measured laminations $\mu,\nu\in {\cal M\cal L}$
let as before 
\[{\rm Bal}(\mu,\nu)=\{S\in cv(F_n)\mid
\langle S,\mu\rangle =\langle S,\nu\rangle\}.\]
We claim that 
if $[T],[T^\prime]$ is a pair of projective arational trees
defining two distinct
boundary points of ${\cal F\cal F}$ and if 
$\mu,\nu$ are two measured laminations
supported in the zero lamination
of $T,T^\prime$ then the sets
\[U(p)=\{[S]\in \overline{[{\rm Thick}_\epsilon(F_n)]}\mid
S\in {\rm Bal}(e^t\mu,e^{-t}\nu)\text{ for some }t>p\}\]
$(p>0)$ 
form a neighborhood basis 
in 
$\overline{[{\rm Thick}_\epsilon(F_n)]}$
for 
the set of all projective trees which are equivalent to $[T]$. By this
we mean that for any open set $\mathcal{U} \subset \overline{[{\rm
    Thick}_\epsilon(F_n)]}$ which contains the set of all projective
trees equivalent to $[T]$, we have $U(p) \subset \mathcal{U}$ for all
sufficiently large $p$.

Namely, fix a tree $V\in {\rm Thick}_\epsilon(F_n)$.
For $t \geq 0$ let 
\[\beta(t)=e^t\mu+e^{-t}\nu/\langle V,e^t\mu+e^{-t}\nu\rangle.\] 
Then $\{\beta(t)\mid t \geq 0\}$ is a compact subset of 
the set of all \emph{currents} for $F_n$, i.e. $F_n$-invariant locally finite
Borel measures
on $\partial F_n\times \partial F_n-\Delta$. As $t\to\infty$,
we have 
\[\beta(t)\to \hat \mu=\mu/\langle V,\mu\rangle\] 
in the space of currents equipped with
the weak$^*$-topology \cite{H12}.
As $\hat \mu$ is  dual to an arational tree, we have 
 $\langle S,\hat \mu\rangle =0$ if and only if $[S]$ is 
equivalent to $[T]$. The above claim now follows once more
from continuity of the length
pairing (as a pairing between $F_n$-trees and currents, see
\cite{KL09}).

Let $T\in {\rm Min}_\epsilon(\mu+\nu)$ and assume that
the first and the second property in the definition
of a $B$-contracting pair hold true for $T$. 
Let 
\[{\cal B}(T)\subset \Lambda(T)\] be the closure of the set of 
all normalized measured laminations which 
are up to scaling induced by a basic primitive
conjugacy class for a tree $U\in {\rm Bal}(\mu,\nu)$.
Then ${\cal B}(T)$ is a compact subset of $\Lambda(T)$
which does not contain the representatives
$\hat \mu,\hat \nu\in \Lambda(T)$ of the measured laminations
$\mu,\nu$.

Let $D(\mu),D(\nu)\subset \Sigma(T)$ be the set of all 
normalized arational trees which 
are dual to $\mu,\nu$. 
By continuity of the length pairing,
the set of functions 
\[{\cal F}=\{U\to \langle U,\zeta\rangle\mid \zeta\in {\cal B}(T)\}\]
is compact for the compact open topology on 
the space of continuous functions on $\Sigma(T)$.
Thus by the above discussion, their values
on the set $D=D(\mu)\cup D(\nu)$ 
are bounded from
below by a positive number $c>0$.  

By continuity, there is some $p>0$  so that
these functions are bounded from below by $c/2$ on 
$\tilde U(p)=\{S\in \Sigma(T)\mid [S]\in U(p)\}$. 
Note that $\tilde U(p)$ is a neighborhood of $D(\mu)$ in 
$\Sigma(T)$.
Similarly, we find a 
neighborhood $\tilde V(q)\subset \Sigma(T)$ of $D(\nu)$ so that
these functions are bounded from below by $c/2$ on $\tilde V(q)$.
As a consequence, 
property (3) in Definition \ref{contracting} holds true for $T$ and 
for $B=\max\{p,q,2/c\}$.

Now by compactness and continuity of the length pairing,
the same property 
holds true for 
pairs $(\mu^\prime,\nu^\prime)$ in a small
neighborhood of $(\mu,\nu)$ in $P$ and for trees 
$S$ in a small neighborhood of $T$, perhaps after
replacing the constant $B$ by $2B$. As the set
$P$ is compact and hence the same holds true
for 
\[Z=\{((\mu,\nu),S)\in P\times W\mid S\in {\rm Min}_\epsilon(\mu+\nu)\}\]
it can be covered by finitely many
open sets which are controlled in this way. 

Together with Remark \ref{morse3} and Proposition~\ref{prop:uniquely-ergodic}, 
this shows that biinfinite
paths in the set $A_0$ determine $B$-contracting 
line of minima where $B=B(L)>0$ only depends on $L$. 
By invariance under the
action of $\mathrm{Out}(F_n)$ and cocompactness, this then holds true for
every biinfinite path from the collection $A$.

To summarize, each biinfinite path $\gamma\in A$ determines a (family of) 
$B$-contracting lines of 
minima $\Psi(\gamma)$, and the map $\Psi:\gamma\to \Psi(\gamma)$ is equivariant
with respect to the action of ${\rm Out}(F_n)$. Now let $\Pi_{\Psi(\gamma)}$
be a balancing projection. The map $(\gamma,X)\to \Pi_{\Psi(\gamma)}(X)$
is equivariant as well. Thus by cocompactness of the action of ${\rm Out}(F_n)$,
the distance between $X\in \gamma$ and $\Pi_{\Psi(\gamma)}(X)\in \Psi(\gamma)$ is 
bounded from above by a universal constant $D(L)$ only depending on $L$.
This is what we wanted to show. \qed 

Using Lemma \ref{morse2}, we obtain

\begin{corollary}\label{stronglymorseone}
Let $\gamma:\mathbb{R}\to {\rm Thick}_{\epsilon}(F_n)$ be a c-coarse 
geodesic for the symmetrized Lipschitz metric. If the path
$t\to \Upsilon(\gamma(t))$ is a uniform quasi-geodesic in 
${\cal F\cal F}$ then $\gamma$ is strongly Morse.
\end{corollary}

Let $\Gamma<{\rm Out}(F_n)$ be convex cocompact.
Then $\Gamma$ is finitely generated, and for one
(and hence any) $\alpha\in {\cal F\cal F}$ the orbit
map $g\in \Gamma\to g\alpha\in {\cal F\cal F}$ is a quasi-isometric embedding.
As ${\cal F\cal F}$ is hyperbolic, this implies that $\Gamma$ is word hyperbolic.
Moreover, the Gromov
boundary $\partial \Gamma$ of $\Gamma$ admits
a $\Gamma$-equivariant embedding into $\partial {\cal F\cal F}$.
We denote by 
\[Q_\Gamma\subset \partial {\cal F\cal F}\]
its image. 
Since $\partial \Gamma$ is compact, the set 
$Q_\Gamma$  
is closed,
$\Gamma$-invariant and minimal for the $\Gamma$-action.
Proposition~\ref{prop:char-endpoints} now immediately implies

\begin{corollary}\label{lineof}
Let $\Gamma<{\rm Out}(F_n)$ be convex cocompact.
There is a number $B>0$ with the following property.
Let $([\mu],[\nu])\in {\cal P\cal M\cal L}^2$ be a pair of 
measured laminations which are dual to
projective trees defining distinct points in 
$Q_\Gamma$. Then $([\mu],[\nu])$ is 
a $B$-contracting pair. For $R>0$ the closed 
$R$-neighborhood of the union of 
all lines of minima
obtained from all such pairs is $\Gamma$-invariant and $\Gamma$-cocompact.
\end{corollary}

Corollary \ref{lineof} and  
Proposition \ref{prop:uniquely-ergodic}
together show

\begin{corollary}\label{necessary}
A convex cocompact group has the properties stated in 
Corollary \ref{linesofminconvex}.
\end{corollary}

Corollary \ref{necessary}, Corollary \ref{linesofminconvex} and 
Lemma \ref{morse2} complete the proof of Theorem \ref{thm:main-characterize}
from the introduction.

To show the implication $1) \Rightarrow 2)$ in Theorem \ref{stability},
it now suffices to establish a local
version of Proposition \ref{prop:char-endpoints} which 
holds true for all paths in the set $A_0$ (recall that 
by our convention, these paths $\gamma$ are defined on a closed inverval
$J\supset [-M_0,M_0]$, and any pair of ending laminations for $\gamma$
is a positive pair). 

For numbers $C>0,N>0$ call a path 
$\gamma:[-b,a]\to {\rm Thick}_\epsilon(F_n)$ 
\emph{$C$-contracting $N$-relative to the endpoints} 
if the following holds true. Let $(\mu,\nu)$ be
a pair of ending laminations for $\gamma$. 
Then $(\mu,\nu)$ is a positive pair, and 
for all 
$t\in [-b+N,a-N]$ the following holds true.
\begin{enumerate}
\item[(i)] There exists a number $\sigma(t)\in \mathbb{R}$ such that
$d(\gamma(t),{\rm Min}_{\epsilon}(e^{\sigma(t)}\mu+e^{-\sigma(t)}\nu))\leq C$.
\item[(ii)] Let ${\cal B}(T)\subset \Lambda(\gamma(t))$ be the set of all
normalized measured laminations which are up to scaling 
induced by a basic primitive conjugacy class for a tree
$U\in {\rm Bal}(e^{\sigma(t)}\mu,e^{-\sigma(t)}\nu)$. Then $\langle S,\xi\rangle\geq 1/C$ for 
every $\xi\in {\cal B}(T)$ and for every tree
\[S\in \Sigma(\gamma(t))\cap 
\Bigl(\cup_{s\in (-\infty,\sigma(t)-C)\cup (\sigma(t)+C,\infty)}{\rm Bal}(e^s\mu,e^{-s}\nu)\Bigr) .\]
\end{enumerate}

In other words, the restriction of the 
path $\gamma$ to the subinterval $[-b+N,a-N]$ 
has all the contraction properties of 
a $C$-contracting line of minima. Note that by the above
definition and Lemma \ref{firstprop}, a contracting line of minima
is precisely an endpoint-relative contracting biinfinite path.

For the formulation of the next corollary, let $B=B(L)>0$ be as in 
Proposition \ref{prop:char-endpoints}.

\begin{corollary}\label{localstability}
For every $L>1$ there are numbers $M=M(L)>0,N=N(L)>0$ with the following
property. Let $J\supset [-M,M]$ and let 
$\gamma:J\to cv_0(F_n)$ be
any one-Lipschitz path whose image under $\Upsilon$ is 
an $L$-quasi-geodesic in ${\cal F\cal F}$. Then 
$\gamma$ is $2B$-contracting $N(L)$-relative to its endpoints.
\end{corollary} 
\begin{proof}
We argue by contradiction and assume that
the corollary does not hold for $2B(L)$.
This means that no number $N(L)$ can be found which fulfills the
above requirements. Assume without loss of generality that
$2B(L)\leq M_0$ where $M_0>0$ is as in the definition of the set $A_0$.

By invariance under the action of the mapping class group, 
there is then a sequence of paths
$\gamma_i\in A_0$, defined on intervals
$[-M_0-i,M_0+i]$, 
such that property (ii) above is violated at
$\gamma(0)$ with $C=2B(L)$ and a pair $([\mu_i],[\nu_i])$ 
of ending laminations
for $\gamma_i$. Choose representatives $\mu_i,\nu_i$ which are
 normalized in such a way that $\langle \gamma_i(0),
\mu_i\rangle=\langle \gamma_i(0),\nu_i\rangle =1$.

As $A_0$ is compact, we may extract a converging subsequence whose
limit is a biinfinite path $\gamma\in A_0$. 
By Lemma \ref{positive} and its proof, by passing to a subsequence
we may assume that $\mu_i\to \mu$ and $\nu_i\to \nu$ where
$\mu,\nu$ are dual to the endpoints of $\gamma$. Proposition
\ref{prop:char-endpoints} shows that $\mu,\nu$ are unique 
(as they are normalized at $\gamma(0)$).

Denote by ${\cal B}(\mu,\nu)\subset \Lambda(\gamma(0))$ 
the set of all normalized measured laminations
which are up to scaling induced by a basic primitive conjugacy class
for a tree $U\in {\rm Bal}(\mu,\nu)$, and for large $i$ define
similarly ${\cal B}(\mu_i,\nu_i)$. 
By continuity, we have $\Lambda(\gamma_i(0))\to \Lambda(\gamma(0))$,
${\rm Bal}(\mu_i,\nu_i)\to {\rm Bal}(\mu,\nu)$ in the Hausdorff
topology for compact subsets of ${\cal M\cal L}$.

For large $i$ let now
${\cal S}_i$ be the closure in 
$\overline{cv(F_n)}$ of the 
set 
\[\Sigma(\gamma_i(0))\cap 
\Bigl( \cup_{s\in (-\infty,-2B]\cup [2B,\infty)}{\rm Bal}(e^s\mu_i,e^{-s}\nu_i)\Bigr)\] 
and denote by 
${\cal S}$ the closure of $\Sigma(\gamma(0))\cap 
\Bigl(\cup_{s\in (-\infty,2B]\cup [2B,\infty)}{\rm Bal}(e^s\mu,e^{-s}\nu)\Bigr).$
As before, ${\cal S}_i$ is a compact subset of 
$\overline{cv(F_n)}$ and the same holds true for ${\cal S}$. Furthermore,
using again continuity of the length function, we have
${\cal S}_i\to {\cal S}$ in the Hausdorff topology for compact subsets of 
$cv(F_n)$.

Now $\langle S,\xi\rangle \geq 1/B$ for every $\xi\in {\cal B}(\mu,\nu)$ and
every $S\in {\cal S}$. Thus 
by continuity of the length function and compactness, 
for sufficiently large $i$ we have $\langle S,\xi\rangle \geq 1/2B$ 
for all $S\in {\cal S}_i$ and all $\xi\in {\cal B}(\mu_i,\nu_i)$. 
However, this is a contradiction
to the assumption on the sequence $\gamma_i$. 
The corollary follows.
\end{proof}

The discussion in Section 3, in particular Lemma \ref{morse2}, now shows 
the following: Let $\gamma:(a,b)\to cv_0(F_n)$ be 
a one-Lipschitz path  
which projects to an $L$-quasi-geodesic in ${\cal F\cal F}$ 
and whose length $b-a$ is at least $2 M(L)$; then
$\gamma(-b+M(L),a- M(L))$ is a strongly $M$-Morse quasi-geodesic. 
Thus the implication $(1)\Rightarrow (2)$ in 
Theorem \ref{stability} is established.

\section{Strongly Morse coarse geodesics}\label{stronglymorse}

The goal of this section is to complete the proof of Theorem
\ref{stability} by showing the direction (2)$\Rightarrow$(1): a coarse
geodesic in $\mathrm{Thick}_\epsilon(F_n)$ which is strongly Morse
projects to a uniform quasi-geodesic in ${\cal F\cal F}$.

Recall that $d_L$ is the one-sided Lipschitz metric on  $cv_0(F_n)$,
and $d$ the two-sided Lipschitz metric.
The metric $d_L$ is not complete 
(its completion consists of all simplicial $F_n$-trees $T\in \overline{cv_0(F_n)}$ 
with volume one quotient 
and no nontrivial edge stabilisers \cite{A12}), but 
$d$ is a complete ${\rm Out}(F_n)$-invariant distance
on $cv_0(F_n)$ \cite{FM11}.

By \cite{AB12}, for every $\delta >0$ 
there is a number $c=c(\delta)>1$ 
such that $d(S,T)\leq c d_L(S,T)$ for all 
$S,T\in {\rm Thick}_\delta(F_n)$. In particular,
for both orientations and for any $C>1$, 
any $C$-quasi-geodesic for $d$ 
which is entirely contained in 
${\rm Thick}_\delta(F_n)$ is a uniform 
quasi-geodesic for the one-sided
Lipschitz metric as well.

Recall from the introduction the definition of a strongly Morse 
coarse geodesic $\gamma$ 
in $cv_0(F_n)$. Its quality is controlled 
by a family of numbers $M(K,\gamma)$ $(K\geq 1)$ which depend on 
$K$ and $\gamma$, with $M(K,\gamma)\leq M(K^\prime,\gamma)$ for 
$K^\prime >K$. 
In the sequel we will 
not make this dependence explicit in our notation and talk 
about uniformly strongly Morse coarse geodesics whenever we mean
a family of coarse geodesics $\gamma$ so that the constants 
$M(K,\gamma)$ can be chosen independently of $\gamma$.

Call a folding path $\zeta$ in $cv_0(F_n)$ 
\emph{stable} if it is entirely contained
in ${\rm Thick}_\epsilon(F_n)$ for some $\epsilon>0$ and if
furthermore $\zeta$ is strongly Morse. 
The next lemma summarizes Corollary \ref{fastfolding} and Remark \ref{morseok}.
It implies that 
for an investigation of strongly Morse coarse geodesics, it suffices
to study stable folding paths.

\begin{lemma}\label{stablefold}
Let $\gamma:[a,b]\to {\rm Thick}_\epsilon(F_n)$ be any finite or infinite
strongly Morse coarse geodesic. Then there exists a stable fast folding
path whose Hausdorff distance to $\gamma$ is uniformly bounded. 
\end{lemma}

The following example was suggested to us by an anonymous referee; it 
shows that fast folding paths which are entirely contained
in ${\rm Thick}_\epsilon(F_n)$ need not be stable.

\begin{example}\label{notstable}
  Let $R$ be a marked metric rose with two petals which is also a train track for an iwip
		$\phi\in {\rm Out}(F_2)$. 
		Let $G$ be two copies of $R$ wedged together and let $\Phi$ be the induced
    outer automorphism (perform $\phi$ independently on each copy of $R$) which 
    has $G$ as a train track representative. Then $\Phi$ is reducible, but the obvious
    fast folding path $\gamma$ from $G$ to $G\circ \Phi^n$ stays uniformly in the thick part of 
    Outer space. 

On the other hand, $\gamma$ is not stable. Namely, 
let $G_t=\gamma(t)/F_n$ be the path of graphs of volume one obtained from $\gamma$.
Each of these graphs is a wedge of two isometric graphs $G_t^1,G_t^2$. 
Modify this path of graphs by scaling $G_t^2$ with a constant $f(t)\in [0,1]$ 
and renormalizing the volume (for some large $n$). 
If the function $f$ is chosen in such a way that its value
equals one at the endpoints and that is decreases very slowly to some chosen number $\delta/2>0$
and increases again slowly to one (here very slowly means that the derivative
of $f$ is required to be very small) then the modified path is a uniform quasi-geodesic
for $d_L$ (which can easily be checked using the definition (\ref{onesidedlip}) for
$d_L$) which passes through
$cv_0(F_n)-{\rm Thick}_\delta(F_n)$. Thus $\gamma$ is not stable.  
 \end{example}

Let $\gamma:[0,\infty)\to {\rm Thick}_\epsilon(F_n)$ be 
a one-sided infinite stable fast folding path. Then $[\gamma(t)]$ converges 
as $t\to \infty$ in $\overline{{\rm CV}(F_n)}$ to a tree 
$[T]\in \partial {\rm CV}(F_n)$.
We call $[T]$ the \emph{endpoint tree} of the path. 
The tree is called \emph{uniquely ergometric} (see \cite{NPR14} for this
notion) if it admits a unique non-atomic length measure up to scale. 
Here a length measure assigns to each non-degenerate
segment of $T$ a positive length, and this length is invariant under the
action of $F_n$ and additive with respect to concatenation. 

The following lemma uses the
the main result of \cite{NPR14} as an essential ingredient. 

\begin{lemma}\label{uniqueergometric}
The endpoint tree of a stable fast folding path
$\gamma:[0,\infty)\to {\rm Thick}_\epsilon(F_n)$ is uniquely ergometric.
\end{lemma}
\begin{proof} Let $\gamma:[0,\infty)\to {\rm Thick}_\epsilon(F_n)$ 
be a stable fast folding path. For $s\geq 0$ write $G_s=\gamma(s)/F_n$.
Then $G_s$ is a metric graph with fundamental group $F_n$, 
volume one, no univalent vertices  and
no loop of length smaller than $\epsilon$.
Furthermore, by perhaps removing the intial segment of $\gamma$ 
we may assume that 
for $s\leq t$ there exists a morphism 
$f_{s,t}:G_s\to G_t$ 
such that $f_{s,u}=f_{t,u}\circ f_{s,t}$ for $s<t<u$. These 
morphisms are homotheties on edges, and the scaling factor for the metric
does not depend on the edge. 

The number of abstract graphs without univalent vertices and fundamental group 
$F_n$ is finite
and therefore we can find a sequence $n_i$ with $n_{i+1}-n_i\geq 1$ such that 
the graphs $G_{n_i}$ are all isomorphic,
ie they are isomorphic to a fixed graph $G$.

An \emph{invariant sequence of subgraphs} is a sequence of non-degenerate
proper subgraphs $E_{n_i}\subset G_{n_i}$ with the property that
$f_{n_i,n_j}$ restricts to a change of marking morphism
$E_{n_i}\to E_{n_j}$. The sequence is \emph{stabilized} if 
for large enough $i$ the restriction of $f_{n_i,n_j}$ to 
$E_{n_i}$ is a permutation. The reasoning in Example \ref{notstable}
shows that fast folding paths containing 
stabilized proper subgraphs are not stable and hence
we may assume from now on that $\gamma$ is \emph{reduced}, 
ie it does not contain stabilized subgraphs
(compare \cite{NPR14}).

We now argue by contradiction and we assume that 
the endpoint tree
$T$ of $\gamma$ is not uniquely ergometric. 
Since by the above remark the sequence
$G_{n_i}$ is reduced, we can apply the results from Section 6 of 
\cite{NPR14}.  

Following \cite{NPR14}, by passing to a subsequence
we may assume that $G_{n_i}$ converges in the moduli space of 
metric graphs to a metric graph $\hat G$. The graph $\hat G$ may have
edges of zero length. Let $E\subset G$ be the set of these edges and denote by
$G/E$ the quotient graph. 
As $\gamma\subset {\rm Thick}_\epsilon(F_n)$, the graph $G/E$ has 
fundamental group $F_n$.
Let 
$\Pi:G\to G/E$ be the natural collapsing map.

By Theorem 5.6 of \cite{NPR14}, there exists
a \emph{transverse decomposition} of $G$ into subgraphs 
$H^0,H^1,\dots,H^k$
with $k\geq 2$ which record the geometry of the different length
measures on the limit tree $[T]$ in the following way. 
First, by Corollary 5.15 of \cite{NPR14}, 
each of the graphs $\Pi(H^j)$ 
does not have univalent vertices, and it contains a loop. 
Moreover, by Lemma 6.7 of \cite{NPR14},  
for each $\ell$ there exists some $i$ such that
the subgraphs $\Pi f_{n_i,n_{i+p}}H^j$ $(j=1,\dots,k, 1\leq p\leq \ell)$ of 
$G/E$ do not share any edge. 
Since $\gamma\subset {\rm Thick}_\epsilon(F_n)$ and since the volume
of $E$ tends to zero along the sequence $G_{n_i}$, 
the volumes of all 
graphs $f_{n_i,n_{i+p}}(H^j)$ are bounded from below by
a universal constant. 

The geometric setting is now very similar to the situation in 
Example \ref{notstable}. Namely,
by the volume renormalization procedure along a folding path
and since $\gamma\subset {\rm Thick}_\epsilon(F_n)$, 
the graphs $H^j$ are folded along the segment
$G_s$ $(n_i\leq s\leq n_{i+\ell})$ with roughly the same speed,
and this speed is linear in $s$. 

The subgraphs $\Pi f_{n_{i+p}}(H^1),\Pi f_{n_{i+p}}(H^2)$ 
of $G/E$ do not share any edges and therefore 

We now modify the path $\gamma$ as follows. 
First collapse all edges of $E$ in the graphs $G_{n_i}$ 
to a point. Denote by $\hat G_{n_i}$ the resulting graph, normalized
to have volume one. 
As the volume of $E$ tends
to zero along the sequence and as the path $\gamma$ is entirely contained in 
${\rm Thick}_\epsilon(F_n)$, 
by possibly removing an initial segment of 
$\gamma$ 
we may assume that the distance in $cv_0(F_n)$ between the 
universal covers of 
$G_{n_i}=\gamma(n_i)/F_n$ 
and $\hat G_{n_i}$ is as small as we wish, uniformly for all $i$.

Now $i\to \hat G_{n_i}$ can be viewed as a sequence of metric 
graphs which are all isomorphic to $G/E$, and they are connected
by morphisms $\hat f_{n_i,n_i+p}$. Furthermore, the subgraphs
$\Pi(H^j)$ $(j\geq 1)$ are invariant along the sequence (they may
perhaps be permuted).
We can now change the metric on $\hat G_{n_{i+p}}$  
by scaling $f_{n_i,n_{i+p}}H^1$ by a positive constant 
which slowly tends to a very small number 
as we follow the sequence, and gradually increase the
scaling parameter again so that it equals one for $\hat G_{n_{i+\ell}}$.
If $\ell$ is sufficiently large then 
as in Example \ref{notstable},  
this deformation produces
a uniform quasi-geodesic with endpoints on $\gamma$ which passes through
$cv_0(F_n)-{\rm Thick}_\delta(F_n)$ for an arbitrarily prescribed 
$\delta >0$. 
This contradicts the assumption on 
stability and shows the lemma.
\end{proof}

\begin{remark} It follows from the discussion in the proof of 
Lemma \ref{uniqueergometric} (which can be thought of an interpretation
of the results in \cite{NPR14}) that folding paths limiting on a 
tree which is not uniquely ergometric share geometric properties with 
Teichm\"uller geodesics whose vertical measured foliation is not
uniquely ergodic (and hence which are not stable). We refer to \cite{NPR14} for
a more comprehensive discussion.
\end{remark}


The following proposition finishes the proof of Theorem~\ref{stability}.

\begin{proposition}\label{morseb}
Uniformly strongly Morse coarse geodesics in ${\rm Thick}_\epsilon(F_n)$
project via the map $\Upsilon$ to uniform quasi-geodesics in 
${\cal F\cal F}$.
\end{proposition}
\begin{proof}
Our goal is to show that stable fast folding paths project to 
uniform quasi-geodesics in the free factor graph.

Shadows of folding paths in ${\cal F\cal F}$ are
uniform unparametrized quasi-geodesics \cite{BF14}. 
By Lemma 2.6 of \cite{H10} and its proof
(more precisely, the last paragraph of the proof which
is valid in the situation at hand without modification), it therefore
suffices to show the following. For $M>0,p>0,\epsilon >0$
there is a
number $k=k(p,M,\epsilon)>0$ such that
the endpoints of any $M$-stable fast folding path 
in ${\rm Thick}_\epsilon(F_n)$ whose $d_L$-length
is at least $k$ 
are mapped by the map $\Upsilon:cv_0(F_n)\to {\cal F\cal F}$ 
 to points of distance at least $p$.

Assume to the contrary that this is not true. Then there 
are $\epsilon >0$, $p>0$ and  
a sequence $\beta_i$ of $M$-stable
fast folding paths
in ${\rm Thick}_\epsilon(F_n)$ of length $i$ 
whose endpoints are mapped by $\Upsilon$ to points in ${\cal F\cal F}$ 
of distance at most $p$. Since the images of folding paths
under the map $\Upsilon$ are uniformly unparametrized
quasi-geodesics in ${\cal F\cal F}$, this then implies that
${\rm diam}(\Upsilon(\beta_i))\leq q$ for all $i$ and 
a universal constant $q>0$.

Using as before invariance under the action of 
${\rm Out}(F_n)$ and 
the Arzela-Ascoli theorem for folding paths,
up to passing to a subsequence we may assume that the 
paths $\beta_i$ converge as $i\to \infty$ to a one-sided infinite 
limiting folding path 
$\beta$. The path $\beta$ 
is contained in 
${\rm Thick}_\epsilon(F_n)$, and it connects a basepoint to a 
tree $[T]\in \partial{\rm CV}(F_n)$. 
Moreover, $\beta$ is $M$-stable. By possibly
removing the initial segment of the path, we may assume that it  
is guided by an \emph{optimal train track map} $f:\beta(0)\to T$ 
(see \cite{H12} for details). This map realizes
the optimal Lipschitz constant for equivariant maps $\beta(0)\to T$, and it
is an isometry on edges.

Our goal is to show that the tree $T$ is arational.  Once we have
established this, we know that the shadow in ${\cal F\cal F}$ 
of the fast folding path $\beta$ has
infinite diameter.  Since $\beta$ is a limit of the paths $\beta_i$ 
for the topology of uniform convergence on compact sets and
since the map $\Upsilon:cv_0(F_n)\to {\cal F\cal F}$ is coarsely
Lipschitz, we deduce that for every $k>0$ and all sufficiently large $i$ there is a
some $t_i>0$ so that the distance between
$\Upsilon(\beta_i(0))$ and $\Upsilon(\beta_i(t_i))$ is at least $k$.
This violates the assumption that the diameter of 
$\Upsilon(\beta_i)$ is at most $q$.

The rest of the proof is concerned with showing that $T$ is arational.
Note first that by Lemma \ref{uniqueergometric}, the endpoint tree $[T]$ is uniquely
ergometric. This implies that there is no
tree $T^\prime\in \partial cv(F_n)$ which can be obtained from $T$ by
a one-Lipschitz alignment preserving map $\rho:T\to T^\prime$
collapsing a nontrivial subtree of $T$ to a point.  

Lemma~\ref{lem:arationality-criterion} now yields that 
$[T]$ is arational provided 
that $T$ does not
have point stabilizers containing free factors.
As before, we argue by contradiction and we assume otherwise.
Thus let $\alpha\in F_n$ be a primitive element which stabilizes a point in 
$T$.  By Lemma 8.1 of \cite{H12}, there is a simplicial tree $S_0$ in 
$\partial cv(F_n)$ of covolume one such that the translation length of 
$\alpha$ on $S_0$ is trivial, and there is a 
train track map $g:S_0\to T$ which gives rise to a folding path entirely consisting
of trees on which $\alpha$ fixes a point. We may assume that
$S_0/F_n$ is a rose with $n-1$ petals. 

Modify $S_0$ slightly to a tree $S_1\in cv(F_n)$ so that 
$S_1/F_n$ is a rose with $n$ petals, one very short petal corresponding
to $\alpha$, and such that $S_0$ is contained in the simplex defined by $S_1$.
For each $t$ connect $S_1$ to $\beta(t)$ by a fast folding path $\xi_t$.
As $\beta$ is stable and fast folding paths are geodesics for $d_L$,  
these fast folding paths are contained in a uniformly bounded 
neighborhood of $\beta$. In particular, they are contained in ${\rm Thick}_\nu(F_n)$ for 
some $\nu>0$. Hence we can take a limit $\xi$ of a subsequence of 
these folding paths as $t\to \infty$.
The path $\xi$ connects $S_1$ to a tree $[T^\prime]$ which is $F_n$-equivariantly 
bilipschitz to $T$. 
Since $[T]$ is uniquely ergometric, we have $[T^\prime]=[T]$. 
In particular, $\alpha$ fixes a point on $T^\prime$.

By an application of Lemma 8.1 of \cite{H12}, there is a tree
$S_2$ in the simplex defined by $S_1$, and
there is a rescaling $T^{\prime\prime}$ of $T$ 
and a train track map $f:S_2\to T^{\prime\prime}$ which guides
the fast folding path $\xi$. The map $f$ is an $F_n$-equivariant
isometry on edges.

If $\alpha$ fixes a point in $S_2$ then the folding path $\xi$ passes
through $S_2$ which is impossible as $\xi$ is contained in
${\rm Thick}_\nu(F_n)$. Thus $S_2/F_n$ is a rose with $n$ petals. Moreover, 
$f$ is an $F_n$-equivariant edge isometry.

Let $\sigma\subset S_2$ 
be an axis for $\alpha$ and let $x\in \sigma$ be a vertex
of $S_2$ on $\sigma$. As the translation length of 
$\alpha$ on $S_2$
is positive and $f$ is an edge isometry, the point 
$f(x)\in T^{\prime\prime}$ is not stabilised by $\alpha$.

Connect $f(x)$ by a minimal segment $s$ to the fixed point set 
${\rm Fix}(\alpha)$ of $\alpha$
in $T^{\prime\prime}$. 
Let $y\in {\rm Fix}(\alpha)$ be the endpoint of $s$. 
As in the proof of Lemma 8.1 of \cite{H12}, we observe that
the geodesic segment in $T^{\prime\prime}$ connecting $f(x)$ to
$\alpha f(x)$ passes through $y$. The turn at $x$
defining the two directions of the axis of $\alpha$ is illegal. 

Now recall that the quotient graph $S_2/F_n$ is a rose. By the above
discussion, the folding procedure identifies the initial and terminal
subsegment of the petal defined by $\alpha$ with unit speed. As all 
illegal turns are folded at once, with unit speed, we conclude that 
each of the quotient graphs of the fast folding path $\beta$ contains an
embedded loop consisting of a single edge which corresponds
to $\alpha$. The argument in the proof of Lemma \ref{uniqueergometric}
now shows that $\gamma$ can not be stable.
The proposition is proven.
\end{proof}

\begin{corollary}\label{morse5}
If $\gamma:\mathbb{R}\to {\rm Thick}_\epsilon(F_n)$ is 
uniformly strongly Morse then there exists a uniformly
contracting line of minima $\zeta$ whose Hausdorff distance to 
$\gamma$ is uniformly bounded.
\end{corollary}


%
%

\begin{remark}
 Similar to the case of Teichm\"uller space with the
    Weil-Petersson metric (see \cite{CS13} for a discussion), we
    believe that there are strongly Morse coarse geodesics in the metric
    completion of Outer space.  Note that by \cite{A12}, this metric
    completion is the space of simplicial $F_n$-trees with quotient of
    volume one and with all edge stabilisers trivial.
\end{remark}

\section{Examples}
\label{sec:examples}

\subsection{Schottky groups}

In analogy to the theory of Kleinian groups, we call a a finitely
generated free convex cocompact subgroup of ${\rm Out}(F_n)$ a
\emph{Schottky group}. Such groups can be generated by a standard
ping-pong construction \cite{KL10,H14}. Namely, an iwip element acts
with north-south dynamics on $\partial {\rm CV}(F_n)$. There is a
unique attracting and a unique repelling fixed point. Each of these
fixed points is a projective arational tree.

Call $\alpha,\beta$ \emph{independent} if the fixed point sets for the 
action of $\alpha,\beta$ on $\partial {\rm CV}(F_n)$ are disjoint. 
If $\alpha,\beta\in {\rm Out}(F_n)$ are independent then 
there are $k>0,\ell>0$ such that $\alpha^k,\beta^\ell$ generate
a free convex cocompact subgroup of ${\rm Out}(F_n)$ 
(\cite{KL10} and Section 6 of \cite{H14}).
As in \cite {FM02}, this construction can be extended 
to groups generated by an arbitrarily large finite number of 
independent iwips.

\subsection{Convex cocompact subgroups of mapping class groups}

Let $S$ be a compact surface of genus $g\geq 2$ with one puncture. 
Let ${\rm Mod}(S)$ be the mapping class group of $S$; then
${\rm Mod}(S)$ is the subgroup of ${\rm Out}(F_{2g})$ of all outer
automorphisms which preserve the conjugacy class of the 
puncture of $S$. 
\begin{proposition}\label{lem:surface-example}
  If $\Gamma < \mathrm{Mod}(S)$ is convex cocompact in the sense of
  Farb-Mosher, then its image in ${\rm Out}(F_{2g})$ is convex
  cocompact in the sense of this article.
\end{proposition}
  To prove this proposition, we use several combinatorial complexes. For the
  surface, we require the arc graph $\mathcal{A}(S)$ and the
  arc-and-curve graph $\mathcal{AC}(S)$ (which is quasi-isometric to
  the curve graph).
  By (1) of Theorem \ref{farbmosher}, a subgroup $\Gamma$ of 
  ${\rm Mod}(S)$ is convex cocompact if and only if the orbit map on 
  the arc-and-curve graph is a quasi-isometric embedding. 
  
  On the free group side we use the free factor graph $\mathcal{FF}$
  and the free splitting graph $\mathcal{FS}$. These four graphs
  naturally admit maps as follows
  \[
  \xymatrix{ \mathcal{A}(S) \ar[r]\ar[d] & \mathcal{FS} \ar[d] \\
  \mathcal{AC}(S) \ar[r] & \mathcal{FF} }
  \]
  The map $\mathcal{A}(S)\to \mathcal{FS}$ associates to an arc
  $a\subset S$ the corank one free factor $\pi_1(S-a)$
	which defines a free splitting of $F_{2g}$ (see \cite{HH14}). Similarly, the map
  $\mathcal{AC}(S)\to\mathcal{FF}$ associates to an arc or curve on
  $S$ some primitive element of $F_n$ which can be realized
	by a simple closed curve in the complement.

  In \cite{HH14} the authors have shown that the map
  $\mathcal{A}(S) \to \mathcal{FS}$ is a quasi-isometric
  embedding. We expect that the natural map 
	${\cal A\cal C}(S)\to {\cal F\cal F}$ is a quasi-isometric embedding
	as well (compare with the related statement in \cite{F15}).

\begin{proof} 
 By \cite{HH14}, the natural inclusion ${\rm Mod}(S)\to {\rm Out}(F_n)$ is 
a quasi-isometric embedding. If $\Gamma\subset {\rm Mod}(S)$ is convex cocompact,
then there is an equivariant embedding $\partial \Gamma\to \partial 
{\rm CV}(F_n)$. Its image consists of trees which are dual to uniquely ergodic
measured geodesic laminations on $S$, and such trees are arational. 

Now a geodesic in $\Gamma$ defines a uniform quasi-geodesic in 
${\rm Mod}(S)$ which projects to a uniform quasi-geodesic in ${\cal A}(S)$ and hence
${\cal F\cal S}$. But uniform quasi-geodesics in ${\cal F\cal S}$ map
to uniform unparametrized quasi-geodesics in ${\cal F\cal F}$ \cite{KR}
and hence following the reasoning in the proof of Proposition \ref{morseb}, 
by cocompactness 
all we need to establish is that the endpoint of such a 
parametrized quasi-geodesic in 
${\cal F\cal S}$ in the boundary of the free splitting graph (which contains
the boundary of the free factor graph) 
is in fact an arational tree. However, we observed above that this
is indeed the case.
\end{proof}

\noindent
Math. Institut der Universit\"at Bonn\\
Endenicher Allee 60, 53115 Bonn, Germany\\

\smallskip
\noindent
e-mail: \\
Ursula Hamenst\"adt: ursula@math.uni-bonn.de\\
Sebastian Hensel: hensel@math.uni-bonn.de

\end{document}